\documentclass[11pt]{article}
\usepackage[a4paper,margin=0.82in]{geometry}
\usepackage{amsmath,amssymb,amsthm,mathtools,bm}
\usepackage{booktabs,graphicx,subcaption}
\usepackage{placeins}
\usepackage{float}
\usepackage{microtype}
\usepackage{enumitem}
\usepackage{multicol}
\usepackage{xcolor}
\usepackage{hyperref}
\usepackage{cleveref}
\hypersetup{colorlinks=true,linkcolor=blue!45!black,citecolor=blue!45!black,urlcolor=blue!45!black}
\setlist{leftmargin=*,nosep}
\newtheorem{definition}{Definition}[section]
\newtheorem{assumption}[definition]{Assumption}
\newtheorem{lemma}[definition]{Lemma}
\newtheorem{proposition}[definition]{Proposition}
\newtheorem{theorem}[definition]{Theorem}
\newtheorem{corollary}[definition]{Corollary}
\newtheorem{remark}[definition]{Remark}
\newcommand{\eps}{\varepsilon}
\newcommand{\dt}{\Delta t}
\newcommand{\Ps}{\Pi_s}
\newcommand{\Pf}{\Pi_f}

\newcommand{\Rc}{R_{\rm SFRC}}
\newcommand{\Tfour}{T_4}

\newcommand{\R}{\mathbb R}
\newcommand{\lemref}[1]{\hyperref[#1]{Lemma~\ref*{#1}}}
\newcommand{\Lemref}[1]{\hyperref[#1]{Lemma~\ref*{#1}}}
\newcommand{\thmref}[1]{\hyperref[#1]{Theorem~\ref*{#1}}}
\newcommand{\Thmref}[1]{\hyperref[#1]{Theorem~\ref*{#1}}}
\newcommand{\propref}[1]{\hyperref[#1]{Proposition~\ref*{#1}}}
\newcommand{\Propref}[1]{\hyperref[#1]{Proposition~\ref*{#1}}}
\newcommand{\corref}[1]{\hyperref[#1]{Corollary~\ref*{#1}}}
\newcommand{\Corref}[1]{\hyperref[#1]{Corollary~\ref*{#1}}}

\title{\bf Slow--Fast Response Correction for a Compact Fourth-Order IMEX Two-Derivative Method}
\author{Zhixin Huo\\
\small School of Mathematics and Information Science, Henan Polytechnic University\\
\small Jiaozuo 454003, Henan, China\\
\small Corresponding author: \texttt{zhixinhuo@hpu.edu.cn}}
\date{}

\begin{document}
\maketitle

\begin{abstract}
A compact two-stage fourth-order two-derivative IMEX method may retain its classical non-commuting fourth-order expansion and L-stable stiff damping while losing high-order accuracy when the relaxation time is comparable with the time step.  We ask whether uniform fourth-order accuracy can be recovered without redesigning the original two-stage, two-implicit-solve core.  For linear relaxation systems with a separated slow spectral subspace, we introduce a slow--fast response correction (SFRC): the exact slow projection is advanced by the unchanged compact IMEX step and projected back to the slow subspace, while the fast complement is advanced by its exact semigroup.  For fixed nonzero relaxation time, the correction is $O(\dt^5)$, so the base mixed expansion through degree four is unchanged, and the pure-fast response remains L-stable.  An abstract slow--fast argument reduces the full-state error to a slow-block defect and converts a uniform $O(\dt^5)$ local estimate into a uniform $O(\dt^4)$ global estimate.  For each Jin--Xin Fourier mode, the required local estimate is proved by an exact projected-amplification factorization and a uniform bound in $x=\eps/\dt$.  In particular, the unchanged compact core already has a uniform fifth-order one-step defect after restriction to the exact finite-$\eps$ slow eigenspace.  A two-dimensional three-variable model yields the same mechanism on fixed Fourier grids.  Symbolic verification and one- and two-dimensional experiments corroborate the analytical predictions.  SFRC is therefore a rigorous linear benchmark identifying the finite-$\eps$ slow--fast information sufficient to reconcile compactness, mixed fourth-order accuracy, stiff damping, and uniform fourth-order time accuracy.
\end{abstract}

\noindent\textbf{Keywords.} IMEX methods; two-derivative methods; stiff relaxation; uniform accuracy; L-stability; slow--fast decomposition.

\noindent\textbf{MSC 2020.} 65L04; 65M20.

\section{Introduction}
High-order time discretization of relaxation systems has to reconcile several requirements that are individually standard but collectively restrictive.  When the relaxation time $\eps$ is fixed and nonzero, the ordinary design order should be observed.  When $\eps\to0$, the method should reproduce the reduced equilibrium dynamics without resolving the fast time scale.  In the intermediate regime $\eps=O(\dt)$, the error constant should remain controlled rather than deteriorating.  At the same time, a practical IMEX method should damp stiff modes strongly and should retain the mixed explicit--implicit elementary differentials generated by a non-commuting split.  The distinction among classical order, asymptotic preservation, asymptotic accuracy, and uniform accuracy is by now well established for hyperbolic relaxation and kinetic equations \cite{CaflischJinRusso1997,BoscarinoRusso2009,HuShuBDF2021,HuShuRK2025,MaHuang2025}.

Uniform-accuracy mechanisms have been developed in several complementary time-discretization frameworks.  Hu and Shu established uniform-accuracy results for selected IMEX-BDF and IMEX-RK schemes under appropriate regularity and stage-structure hypotheses \cite{HuShuBDF2021,HuShuRK2025}, while Ma and Huang treated general linear hyperbolic relaxation systems under a structural-stability framework \cite{MaHuang2025}.  These analyses select or design schemes so that the required uniform order and stability conditions are satisfied.  The question here is different: we start from a \emph{fixed}, coefficient-rigid, two-stage two-derivative IMEX core whose classical fourth-order and stiff-damping properties are already prescribed, and ask whether uniform fourth order can be recovered without changing those stage coefficients.  A recent sequential two-stage fourth-order transport--relaxation construction with ADER trajectory derivatives \cite{HuoSequential2026} provides another route to high-order stiff accuracy, but it uses a different sequential temporal architecture.  The present work instead keeps the original compact IMEX core on the exact finite-$\eps$ slow subspace and isolates the additional slow--fast response information sufficient for UA4.  Accordingly, no claim of a first uniform-accuracy theory is made; the contribution is a compatibility and diagnostic result for this specific compact multiderivative topology.

The starting point of this work is the compact two-stage fourth-order two-derivative IMEX method introduced in \cite{HuoCompact2026}.  Its defining feature is that the split temporal derivatives are evaluated along the complete vector field,
\begin{equation}
 \dot F=F_U(F+G),\qquad \dot G=G_U(F+G).
 \label{eq:fullder}
\end{equation}
For a linear non-commuting split $F(U)=AU$, $G(U)=BU$, this gives
\[
 \dot F=A(A+B)U=A^2U+ABU,\qquad
 \dot G=B(A+B)U=BAU+B^2U,
\]
so the $AB$ and $BA$ interactions are present intrinsically rather than reconstructed through a large collection of additive Runge--Kutta coupling conditions.  The method uses one intermediate stage and two nontrivial implicit solves per step, is fourth order for fixed split operators, and its pure-implicit stability function is A-stable and L-stable with $O(|z|^{-2})$ stiff decay \cite{HuoCompact2026}.  These properties are the structural assets that the present paper is required to preserve.

A separate companion asymptotic analysis by the author shows that ordinary fourth order does not by itself guarantee fourth-order accuracy after the singular relaxation limit is taken \cite{HuoAA2026}.  For equilibrium-prepared linear Jin--Xin data, the unmodified strict limiting macro map has an $O(\dt^2)$ one-step defect and therefore only first-order fixed-time convergence.  That analysis also gives a coefficient-rigidity result: once the midpoint stage and two-solve topology are fixed, ordinary fourth-order non-commuting conditions determine the available final weights, so the strict-limit defect cannot be removed by a simple coefficient retuning.  An equilibrium-defect correction repairs the strict endpoint, but it does not repair the finite-$\dt/\eps$ fast response.  This motivates a stronger design question: can one modify the original method while retaining its compact temporal core, the base method's mixed expansion through order four, and the L-stability property, and at the same time obtain a genuinely uniform fourth-order estimate?  This is the core problem of the paper.  It is not an endpoint-consistency problem: the modification must remain invisible to the ordinary fourth-order non-commuting expansion at fixed $\eps$, must preserve strong damping of the fast block, and must control the entire distinguished family $\eps/\dt\in(0,\infty)$ with one error constant.

The answer given here is yes for a clearly delimited linear spectral setting.  We assume that the semidiscrete relaxation operator admits a separated slow spectral subspace and that the corresponding projector is available.  The corrected method first decomposes the old state into slow and fast spectral components.  The slow component is advanced with exactly the original two-stage two-derivative IMEX formula and then projected back to the slow subspace.  The fast component is advanced with its exact semigroup.  No coefficient in the original stage formula is changed and no additional implicit solve is introduced.  This statement concerns implicit solves only: SFRC additionally requires the exact slow projection and exact fast semigroup action, operations that are inexpensive for the small constant-coefficient Fourier blocks studied here but are not negligible in general.  This construction is called the \emph{slow--fast response correction} (SFRC).

The main mathematical point is not the definition itself but the proof that the desirable properties survive simultaneously.  For fixed $\eps>0$, the original base step and the exact flow agree through $O(\dt^4)$.  Since the exact flow is block diagonal with respect to the exact spectral projectors, replacing the base slow--fast leakage and the base fast block changes the full map only by $O(\dt^5)$.  Thus the $h$-Taylor operator coefficients through degree four are unchanged, and hence the evaluated non-commuting mixed expansion of the base method is preserved.  In the pure-fast scalar limit SFRC reduces to $e^z$, hence it is L-stable; the property is preserved, but the base rational stability function itself is not.  For uniform accuracy, the exact fast component cancels from the numerical error completely, and the problem reduces to a slow-block fourth-order estimate.

A uniform-accuracy claim requires the slow-block local estimate to be established with a constant independent of $\eps$, not merely postulated.  We close this point explicitly for the Fourier-mode Jin--Xin system.  We derive the slow-projected amplification factor $q(h,\eps,d)$ exactly and prove the factorization
\[
 q(h,\eps,d)-\Tfour(h\lambda_s)=h^5\mathcal C(h,\eps,d),
\]
where $\lambda_s$ is the exact slow eigenvalue.  Introducing $x=\eps/h$, using the characteristic relation $\lambda_s+cd=\eps(a^2d^2-\lambda_s^2)$, and exploiting that $d=i\kappa$ makes the implicit denominators have a strictly negative real part, we prove that $\mathcal C$ is uniformly bounded for all $x\in[0,\infty)$ under a non-coalescence condition on the slow and fast eigenvalues.  This yields a genuine $O(h^5)$ local bound independent of $\eps$.  A separate discrete stability estimate then converts the local estimate into a global $O(h^4)$ bound.  The proof covers arbitrary initial data because the fast component is advanced exactly, not merely damped.

The two-dimensional extension is also more than a numerical analogy.  For the three-variable periodic relaxation model used below, the variables $u$ and $p=d_xv+d_yq$ form a closed $2\times2$ relaxation block, while the remaining direction is fast.  The base stage equations intertwine with this reduction.  Consequently the same slow-amplification factorization applies after replacing $cd$ by $c_xd_x+c_yd_y$ and $a^2d^2$ by $a_x^2d_x^2+a_y^2d_y^2$.  An explicit full $3\times3$ slow-projector formula then gives the required uniform projector bound under a checkable non-coalescence condition, yielding a modewise fourth-order uniform result on any fixed Fourier grid satisfying that condition.

The contribution is therefore best viewed as one connected structural result rather than a collection of independent modifications.  The method keeps the original compact IMEX core on the slow subspace and supplies exactly the finite-$\eps$ slow--fast response information that the endpoint correction lacks.  The theory then proves three consequences of this construction: the correction is $O(h^5)$ at fixed $\eps$ and hence preserves the base non-commuting expansion through order four; the pure-fast block remains L-stable; and, once a uniform fifth-order slow local defect is established, the full-state global error is uniformly fourth order.  The nontrivial model-specific contribution is the proof of that uniform slow defect for Jin--Xin by an exact factorization and an $x=\eps/h$ bound.  An immediate consequence, stated explicitly below, is that the unchanged compact base core is uniformly fourth-order accurate in its one-step response after restriction and projection to the exact finite-$\eps$ slow eigenspace.  This pinpoints the structural mechanism behind the earlier order loss: the obstruction is the mismatch between strict-equilibrium information and the finite-$\eps$ slow--fast decomposition, not a hidden low-order defect of the slow-projected two-stage core.  The two-dimensional reduction and the reproducible symbolic and numerical experiments are included to show that the same mechanism survives beyond a single scalar amplification test.  Thus the central claim is a compatibility theorem for a linear spectral benchmark: compact two-stage structure, the original fourth-order mixed expansion, L-stability, strict-limit fourth order, and uniform-in-$\eps$ fourth-order time accuracy can coexist.

The remainder of the paper is organized as follows.  Section~2 recalls the compact fourth-order IMEX two-derivative base formula, its linear one-step matrix, and the precise operator-level meaning of preserving the base mixed expansion.  Section~3 defines SFRC by the exact slow--fast spectral splitting and proves the fixed-$\eps$ fifth-order perturbation property together with preservation of the L-stability property on the pure-fast test.  Section~4 isolates the general error mechanism and proves the abstract implication ``uniform fifth-order slow local defect plus uniformly bounded slow dynamics $\Rightarrow$ full-state uniform fourth-order convergence.''  Section~5 verifies the difficult local-defect hypothesis for each Jin--Xin Fourier mode by a cancellation-resistant slow eigenvalue, a uniformly bounded projector, an exact algebraic factorization, and a bound that is uniform for all $x=\eps/h\ge0$; it then isolates the resulting uniform slow-block accuracy of the unchanged compact core as a structural corollary and identifies the strict relaxation limit.  Section~6 derives the two-dimensional three-variable reduction and proves the corresponding uniform-in-$\eps$ temporal result on a fixed Fourier grid.  Section~7 presents four numerical examples, each with the full refinement history, figures, and interpretation: structural/L-stability diagnostics, a modal relaxation scan including distinguished paths, a one-dimensional periodic Jin--Xin problem, and a two-dimensional periodic relaxation problem.  Section~8 discusses the main contribution, the exact-projector/exact-semigroup limitation, and the additional analytical ingredients that would be required for a genuinely nonlinear UA4 extension.  Section~9 concludes, while the appendices provide an independently checkable algebraic certificate, the stiff-stability propagation argument, and complete reproducibility information.

\section{The compact fourth-order base method and the properties to preserve}
Consider the additive split problem
\begin{equation}
 U_t=F(U)+G(U).
 \label{eq:split}
\end{equation}
With the full-vector-field derivatives \eqref{eq:fullder}, the compact midpoint stage is
\begin{equation}
 U^*=U^n+\frac h2F(U^n)+\frac{h^2}{8}\dot F(U^n)
      +\frac h2G(U^*)-\frac{h^2}{8}\dot G(U^*),
 \label{eq:stage}
\end{equation}
and the final stage is
\begin{align}
 U^{n+1}_{\rm b}={}&U^n+hF(U^n)
 +\frac{h^2}{6}\bigl[\dot F(U^n)+2\dot F(U^*)\bigr]\notag\\
 &+hG(U^{n+1}_{\rm b})
 -\frac{h^2}{6}\bigl[\dot G(U^{n+1}_{\rm b})+2\dot G(U^*)\bigr],
 \label{eq:final}
\end{align}
where $h=\dt$.  The ordinary fourth-order consistency and stability analysis of \eqref{eq:stage}--\eqref{eq:final} is given in \cite{HuoCompact2026}.  We record the linear step matrix because it is used throughout the proofs.

For
\begin{equation}
 U_t=(A+B)U=:LU,
 \label{eq:linear}
\end{equation}
let $S(h;A,B)$ denote the one-step matrix.  Solving the two stage equations gives
\begin{align}
 P_*={}&\left(I-\frac h2B+\frac{h^2}{8}BL\right)^{-1}
 \left(I+\frac h2A+\frac{h^2}{8}AL\right),
 \label{eq:pstar}\\
 S(h;A,B)={}&\left(I-hB+\frac{h^2}{6}BL\right)^{-1}
 \left[I+hA+\frac{h^2}{6}AL+\frac{h^2}{3}(A-B)LP_*\right].
 \label{eq:S}
\end{align}
For every fixed pair of bounded matrices $A,B$,
\begin{equation}
 S(h;A,B)=e^{h(A+B)}+O(h^5),\qquad h\to0.
 \label{eq:fixedorder}
\end{equation}
The estimate in \eqref{eq:fixedorder} is a fixed-operator statement; the constant is not asserted to be uniform when $B=B_0/\eps$ becomes singular.

For $A=0$ and scalar $B=\lambda$, the base stability function is
\begin{equation}
 R_{\rm b}^{I}(z)=
 \frac{1-z/2-5z^2/24}
 {(1-z/2+z^2/8)(1-z+z^2/6)},\qquad z=h\lambda.
 \label{eq:RI}
\end{equation}
The original analysis proves A-stability and
\[
 R_{\rm b}^{I}(z)=-10z^{-2}+O(z^{-3}),\qquad |z|\to\infty,
\]
so the implicit substructure is L-stable \cite{HuoCompact2026}.

We next formalize what it means for a correction to preserve the mixed property of the base scheme.

\begin{definition}[Preservation of the base mixed expansion through order four]\label{def:mixed-preservation}
A corrected one-step map $R_c(h;A,B)$ is said to preserve the base method's mixed expansion through order four if, for every fixed non-commuting pair $A,B$,
\[
 R_c(h;A,B)-S(h;A,B)=O(h^5).
\]
Then the $h$-Taylor operator coefficients of the corrected and base maps agree through order four.  Since the base coefficients are the established non-commuting word polynomials in $A$ and $B$, the corrected map reproduces the same evaluated mixed expansion for every fixed pair.
\end{definition}
This definition does not redefine the mixed-compatibility concept of the base method; it gives an operator-level criterion for proving that the already established mixed structure is left unchanged through the design order.  It is deliberately stronger than checking a few commuting scalar order conditions and does not require assigning a separate free-algebra representation to the spectral projector.  Since the base method uses $A(A+B)$ and $B(A+B)$, the distinct words $AB$ and $BA$ occur already at the derivative level.  An $O(h^5)$ perturbation leaves every degree-four operator coefficient of the base expansion unchanged.

\begin{remark}[Logical role of the companion asymptotic analysis]\label{rem:companion-role}
The strict-limit defect, coefficient-rigidity result, and equilibrium-defect correction cited in the Introduction are motivation from the companion study \cite{HuoAA2026}; none is an assumption in the theorems below.  The analysis in the present paper starts from the base formulas \eqref{eq:stage}--\eqref{eq:S} and proves the SFRC properties independently in Sections~3--6.  In particular, the uniform slow-block estimate, the full-state convergence theorem, and the strict SFRC limit are all derived here.  Thus the main compatibility theorem is logically self-contained even if the companion results are viewed only as motivation.
\end{remark}

\section{Slow--fast response correction}
We specialize to a finite-dimensional linear relaxation family
\begin{equation}
 U_t=L_\eps U,\qquad L_\eps=A+\frac1\eps B_0,
 \label{eq:Lrelax}
\end{equation}
with $0<\eps\le\eps_0$.  The matrices may depend on a fixed spatial Fourier mode, but $h$ and $\eps$ are the temporal and relaxation parameters.

\begin{assumption}[Spectral splitting]\label{ass:spectral}
For every $0<\eps\le\eps_0$, there are complementary spectral projectors
\begin{equation}
 I=\Ps(\eps)+\Pf(\eps),\qquad
 [L_\eps,\Ps]=[L_\eps,\Pf]=0,
 \label{eq:proj}
\end{equation}
with uniformly bounded norms,
\begin{equation}
 \sup_{0<\eps\le\eps_0}
 \bigl(\|\Ps(\eps)\|+\|\Pf(\eps)\|\bigr)\le C_P.
 \label{eq:projbound}
\end{equation}
The slow generator $L_s=\Ps L_\eps\Ps$ satisfies
\begin{equation}
 \sup_{0<\eps\le\eps_0}\|L_s(\eps)\|\le M_L.
 \label{eq:slowbound}
\end{equation}
\end{assumption}
Let $S_\eps=S(h;A,B_0/\eps)$ be the original compact base step.

\begin{definition}[SFRC]\label{def:sfrc}
The slow--fast response corrected map is
\begin{equation}
 \Rc(h,\eps)=\Ps S_\eps\Ps+e^{hL_\eps}\Pf.
 \label{eq:SFRC}
\end{equation}
Operationally, the old state is first decomposed as $U^n=U_s^n+U_f^n$ with $U_s^n=\Ps U^n$ and $U_f^n=\Pf U^n$.  The slow part $U_s^n$ is advanced by the original two-stage formula \eqref{eq:stage}--\eqref{eq:final} and then projected with $\Ps$; the fast part is advanced by $e^{hL_\eps}$.  Thus no new implicit stage or implicit solve is introduced.  The comparison is only at the level of implicit solves: the exact spectral projection and fast semigroup action are additional operations, and their practical cost is problem dependent.
\end{definition}

The correction can also be written relative to a full base step.  Set $E_\eps=e^{hL_\eps}$ and $D_\eps=S_\eps-E_\eps$.  Since $E_\eps$ commutes with the spectral projectors,
\begin{equation}
 \Rc-S_\eps=\Ps D_\eps\Ps-D_\eps.
 \label{eq:corrcompact}
\end{equation}
This identity makes the fixed-parameter order preservation immediate.

\begin{proposition}[Fifth-order perturbation and preservation of the base mixed expansion]\label{prop:mixed-preservation}
Fix $\eps>0$.  Then
\begin{equation}
 \Rc(h,\eps)-S_\eps=O(h^5),\qquad h\to0.
 \label{eq:O5corr}
\end{equation}
Consequently SFRC has ordinary order four and preserves the base method's non-commuting mixed expansion through degree four.
\end{proposition}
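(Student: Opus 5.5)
The plan is to work directly from identity \eqref{eq:corrcompact},
\[
\Rc-S_\eps=\Ps D_\eps\Ps-D_\eps,
\qquad D_\eps=S_\eps-E_\eps .
\]
For fixed $\eps>0$ the operators $A$ and $B=B_0/\eps$ form a fixed pair of bounded matrices. The fixed-operator estimate \eqref{eq:fixedorder} therefore gives $\|D_\eps\|\le C(\eps)h^5$ for all sufficiently small $h$. The projectors $\Ps(\eps)$ and $\Pf(\eps)$ do not depend on $h$, and they are bounded (by \eqref{eq:projbound}, or simply because $\eps$ is fixed). Hence
\[
\|\Rc-S_\eps\|\le(\|\Ps\|^2+1)\,\|D_\eps\|\le (C_P^2+1)\,C(\eps)\,h^5 ,
\]
which is \eqref{eq:O5corr}.

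Before this I will record how \eqref{eq:corrcompact} follows from \eqref{eq:SFRC}, to keep the argument self-contained. Since $E_\eps$ commutes with $\Ps$ and $\Pf$, and $\Ps^2=\Ps$, one has $\Ps E_\eps\Ps+E_\eps\Pf=E_\eps(\Ps+\Pf)=E_\eps$. Substituting $S_\eps=E_\eps+D_\eps$ into \eqref{eq:SFRC} then gives $\Rc=E_\eps+\Ps D_\eps\Ps$. Subtracting $S_\eps=E_\eps+D_\eps$ yields the identity.

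For the consequences, combine the result with \eqref{eq:fixedorder} via the triangle inequality. This gives $\Rc-e^{hL_\eps}=O(h^5)$, i.e. a local error of order five, which is ordinary order four for the fixed problem. Preservation of the mixed expansion is exactly the criterion of Definition~\ref{def:mixed-preservation}, $R_c-S=O(h^5)$, so the $h$-Taylor coefficients of $\Rc$ and $S_\eps$ coincide through degree four.

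The only point I expect to need care is the interpretation in the definition, which asks for the estimate for every fixed non-commuting pair $(A,B)$. Here the projectors depend on the pair through $L=A+B$. I will argue that this is harmless: for each fixed pair, $\Ps$ is a fixed bounded matrix, and it commutes with $e^{hL}$ for all $h$. This commutation holds because $\Ps$ is a spectral projector of $L$, i.e. a polynomial in $L$. So the same two-line estimate applies verbatim for any fixed pair admitting a spectral splitting. No uniformity in $\eps$ is claimed, and the constant $C(\eps)$ may blow up as $\eps\to0$; that issue is deferred to the later sections.
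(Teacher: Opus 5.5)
Your proposal is correct and follows the paper's proof essentially step for step: identity \eqref{eq:corrcompact}, the fixed-operator estimate \eqref{eq:fixedorder} giving $D_\eps=O(h^5)$, the bound $\|\Rc-S_\eps\|\le(\|\Ps\|^2+1)\|D_\eps\|$, and the consequences read off from Definition~\ref{def:mixed-preservation}. Your explicit derivation of \eqref{eq:corrcompact}, and your remark that each fixed pair only needs $\Ps$ to commute with $L$ (hence with $e^{hL}$), are accurate additions that the paper leaves implicit.
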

\begin{proof}
For fixed $\eps$, the matrices $A$ and $B_0/\eps$ are fixed, hence \eqref{eq:fixedorder} gives $D_\eps=O(h^5)$.  The spectral projector is bounded for this fixed $\eps$.  Equation \eqref{eq:corrcompact} therefore yields
\[
 \|\Rc-S_\eps\|
 \le \|\Ps\|^2\|D_\eps\|+\|D_\eps\|=O(h^5).
\]
The Taylor expansions of $\Rc$ and $S_\eps$ consequently agree through $h^4$, equivalently
\[
 \partial_h^k\Rc(0,\eps)=\partial_h^k S_\eps(0),\qquad 0\le k\le4.
\]
Thus the $h$-Taylor operator expansion of the corrected map through degree four is identical to that already established for the base method.  In particular, the evaluated mixed combinations containing the distinct $AB$ and $BA$ contributions generated by \eqref{eq:fullder} are unchanged.
\end{proof}

\begin{remark}
The proposition is a fixed-$\eps$ statement, which is exactly the level at which ordinary order and the base method's non-commuting mixed expansion are compared.  Uniform accuracy requires a different estimate because the fixed-$\eps$ constant behind $O(h^5)$ may blow up as $\eps\to0$.  Closing that gap is the purpose of Sections 4--6.
\end{remark}

\begin{proposition}[Pure-fast L-stability]\label{prop:lstable}
For the scalar purely fast equation $u_t=\lambda u$, $\Re\lambda\le0$, the slow projector is zero and SFRC has stability function
\begin{equation}
 R_{\rm SFRC}^{I}(z)=e^z,\qquad z=h\lambda.
 \label{eq:expstab}
\end{equation}
Hence $|R_{\rm SFRC}^{I}(z)|\le1$ for $\Re z\le0$ and $R_{\rm SFRC}^{I}(z)\to0$ as $\Re z\to-\infty$; SFRC is L-stable.
\end{proposition}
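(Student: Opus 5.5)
The plan is to specialize Definition~\ref{def:sfrc} to the scalar purely fast problem and read off the stability function directly. For $u_t=\lambda u$ the whole state space is one-dimensional and, by hypothesis, consists entirely of the fast component. I will therefore take $\Ps=0$ and $\Pf=1$. This is a legitimate instance of Assumption~\ref{ass:spectral}: the identities \eqref{eq:proj} hold trivially because scalars commute, the bound \eqref{eq:projbound} holds with $C_P=1$, and the slow generator $L_s=\Ps L_\eps\Ps=0$ satisfies \eqref{eq:slowbound} with $M_L=0$.

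Substituting into \eqref{eq:SFRC} gives
\[
\Rc=0\cdot S_\eps\cdot 0+e^{h\lambda}\cdot 1=e^{z},\qquad z=h\lambda.
\]
This is \eqref{eq:expstab}. The base rational factor $R_{\rm b}^I$ from \eqref{eq:RI} drops out completely. The base step is not evaluated at all, so there is no issue about solvability of its implicit denominators.

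The stability claims then follow from $|e^{z}|=e^{\Re z}$. If $\Re z\le0$, then $|e^z|\le1$, which is A-stability. As $\Re z\to-\infty$, $|e^z|\to0$, which is the stiff-decay requirement. These two facts together give L-stability in the sense used for \eqref{eq:RI}.

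One point needs a remark rather than an argument. The standard definition of L-stability asks for $R(z)\to0$ as $|z|\to\infty$, but $e^z$ has no limit along the imaginary axis. I will therefore state explicitly that decay is claimed only as $\Re z\to-\infty$, which is exactly the formulation in the proposition. Beyond this remark there is no real obstacle; the proof is a direct substitution.
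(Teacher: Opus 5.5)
Your proposal is correct and follows the paper's proof: set $\Ps=0$ and $\Pf=I$, substitute into the SFRC definition to get $e^{h\lambda}$, and invoke the standard bounds for $e^z$ on the closed left half-plane. Your extra check that this choice satisfies the spectral-splitting assumption is accurate, and the paper leaves it implicit. So is your remark that decay is claimed only as $\Re z\to-\infty$, since $e^z$ has no limit along the imaginary axis.
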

\begin{proof}
For a purely fast scalar block, $\Ps=0$ and $\Pf=I$.  Substitution into \eqref{eq:SFRC} gives $\Rc=e^{h\lambda}$.  The stated bounds are the standard bounds for the exponential on the closed left half-plane.
\end{proof}

The corrected scheme therefore preserves the L-stability property, but it does not preserve the original rational stability function.  On the fast block the base $O(|z|^{-2})$ algebraic stiff decay is replaced by the exact exponential response.  Example~1 compares the two responses quantitatively.

\section{Abstract full-state uniform fourth-order theorem}
We separate the error mechanism that is common to all linear spectral blocks from the model-specific task of proving a uniform local defect.  Define the numerical and exact slow one-step operators
\[
 \mathcal M_s(h,\eps)=\Ps S_\eps\Ps,
 \qquad
 \mathcal E_s(h,\eps)=e^{hL_\eps}\Ps=e^{hL_s}\Ps.
\]
The notation is chosen deliberately: $M_L$ in \eqref{eq:slowbound} is a scalar bound for the slow generator, whereas $\mathcal M_s$ is the numerical slow map.

\begin{assumption}[Uniform slow local defect]\label{ass:uniformlocal}
There exist $h_0>0$ and $C_\ell>0$, independent of $\eps$, such that
\begin{equation}
 \|\mathcal M_s(h,\eps)-\mathcal E_s(h,\eps)\|\le C_\ell h^5,
 \qquad 0<h\le h_0,
 \quad 0<\eps\le\eps_0.
 \label{eq:uniformlocal}
\end{equation}
\end{assumption}

The proof of uniform accuracy has three logically separate steps.  First, the exact fast block is represented without numerical error.  Second, \eqref{eq:uniformlocal} provides a local defect whose constant does not deteriorate as $\eps/h$ varies.  Third, only the slow numerical map is iterated, and its generator is uniformly bounded, so the stability factor is $e^{CT}$ rather than $e^{CT/\eps}$.  The next lemma and theorem make these steps explicit.

\begin{lemma}[Uniform stability of the corrected slow block]\label{lem:slow-stability}
Under \eqref{eq:projbound}, \eqref{eq:slowbound}, and \eqref{eq:uniformlocal}, there is a constant $C_T$, independent of $h$ and $\eps$, such that
\begin{equation}
 \|\mathcal M_s(h,\eps)^n\|\le C_T,
 \qquad 0\le nh\le T.
 \label{eq:Msstable}
\end{equation}
\end{lemma}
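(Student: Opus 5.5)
The plan is to write $\mathcal M_s$ as a small perturbation of the exact slow propagator and then use a discrete Gronwall (telescoping) argument.

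Set $\mathcal E_s=\mathcal E_s(h,\eps)$ and $\mathcal M_s=\mathcal M_s(h,\eps)$, and write
\[
 \mathcal M_s=\mathcal E_s+\Delta_s,\qquad \|\Delta_s\|\le C_\ell h^5,
\]
using \eqref{eq:uniformlocal}.

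\textbf{Step 1: powers of the exact slow map.} Since $\Ps$ is a spectral projector commuting with $L_\eps$, we have $\Ps^2=\Ps$ and $L_s\Ps=\Ps L_s$. Hence
\[
 \mathcal E_s^k=e^{khL_s}\Ps .
\]
By \eqref{eq:projbound} and \eqref{eq:slowbound},
\[
 \|\mathcal E_s^k\|\le \|e^{khL_s}\|\,\|\Ps\|\le C_P e^{M_L T},\qquad 0\le kh\le T .
\]
This bound is independent of $\eps$. This is exactly where uniformity enters: only the bounded slow generator appears, never $B_0/\eps$.

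\textbf{Step 2: telescoping.} Expand
\[
 \mathcal M_s^n=\mathcal E_s^n+\sum_{k=0}^{n-1}\mathcal M_s^{k}\,\Delta_s\,\mathcal E_s^{\,n-1-k}.
\]
Let $a_n=\max_{0\le k\le n}\|\mathcal M_s^k\|$ and $K=C_Pe^{M_LT}$. Then
\[
 \|\mathcal M_s^n\|\le K+nK C_\ell h^5\,a_{n-1}.
\]
A simpler route is the one-step bound
\[
 \|\mathcal M_s\|\le \|\mathcal E_s\|+C_\ell h^5 .
\]
This alone is not enough, because $\|e^{hL_s}\Ps\|$ may exceed $1+Ch$ by a factor involving $C_P$. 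So I will use the telescoped form. With $nh\le T$ and $h\le h_0$, the coefficient satisfies
\[
 nKC_\ell h^5\le KC_\ell T h^4 .
\]
For $h\le h_1$, where $KC_\ell Th_1^4\le\tfrac12$, this gives $a_n\le K+\tfrac12 a_n$, hence $a_n\le 2K$.

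\textbf{Step 3: remaining range of $h$.} For $h_1<h\le h_0$ the number of steps is at most $T/h_1$. Each factor is bounded by
\[
 \|\mathcal M_s\|\le K+C_\ell h_0^5 ,
\]
so $\|\mathcal M_s^n\|\le (K+C_\ell h_0^5)^{T/h_1}$. This is a finite constant independent of $\eps$. I take $C_T$ to be the maximum of the two bounds.

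\textbf{Main obstacle.} The delicate point is avoiding a one-step contractivity estimate: the norm of $\Ps$ need not be $1$, so naive powers of $\|\mathcal M_s\|$ would yield $C_P^n$. The telescoping against exact powers $e^{khL_s}\Ps$, which are uniformly bounded as a semigroup restricted to the slow range, circumvents this. The surplus factor $h^4$ in the local defect absorbs the $n\sim T/h$ summation. I should also check that $\mathcal M_s$ maps into $\operatorname{ran}\Ps$ so that the identity $\mathcal E_s^k=e^{khL_s}\Ps$ is used consistently; this holds since $\Ps^2=\Ps$.
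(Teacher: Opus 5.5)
Your argument is correct, but it takes a different route from the paper. The paper does not telescope in this lemma. It restricts to the invariant subspace $\operatorname{Ran}\Ps$, on which $\mathcal E_s v=e^{hL_s}v$, so $\|\mathcal E_s v\|\le e^{M_Lh}\|v\|$ with no factor $\|\Ps\|$, because $\|e^{hL_s}\|\le e^{h\|L_s\|}$ on the whole space. Since $\mathcal M_s$ maps $\operatorname{Ran}\Ps$ into itself, the one-step bound $e^{M_Lh}+C_\ell h^5\le e^{C_1h}$ iterates directly on that range. The constant $C_P$ then enters only once, at the end, through $\mathcal M_s^n w=\mathcal M_s^n\Ps w$.

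The obstruction you identify for the naive one-step estimate (a factor $C_P$ per step) therefore arises only with full-space norms. You noticed the range invariance at the end but did not use it; using it is exactly the paper's proof, and it gives the clean constant $C_Pe^{C_1T}$ for all $0<h\le h_0$ at once.

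Your route instead compares $\mathcal M_s^n$ with the uniformly power-bounded family $\mathcal E_s^k=e^{khL_s}\Ps$ and absorbs the $n\sim T/h$ sum with the surplus factor $h^4$. This is the standard perturbation argument for power-bounded operators. It is more robust: it needs only $\sup_{kh\le T}\|\mathcal E_s^k\|<\infty$ and a defect that is $o(h)$, not a near-contractive one-step bound in some norm.

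Its cost is the small-$h$ threshold $h_1$ and the crude constant $(K+C_\ell h_0^5)^{T/h_1}$ in Step~3. You can drop the splitting entirely with a discrete Gronwall inequality. Set $b_n=\|\mathcal M_s^n\|$ and apply it to $b_n\le K+KC_\ell h^5\sum_{k<n}b_k$. This gives $b_n\le K(1+KC_\ell h^5)^n\le Ke^{KC_\ell T h_0^4}$ for all $0<h\le h_0$.
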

\begin{proof}
Because $\|L_s\|\le M_L$, the exact slow evolution restricted to $\operatorname{Ran}\Ps$ satisfies
\[
 \|\mathcal E_s v\|=\|e^{hL_s}v\|\le e^{M_Lh}\|v\|,
 \qquad v\in\operatorname{Ran}\Ps.
\]
This restriction is essential: the full-space estimate would contain a factor $\|\Ps\|$ at every step.  The map $\mathcal M_s=\Ps S_\eps\Ps$ leaves $\operatorname{Ran}\Ps$ invariant.  Hence for $v$ in that range, Assumption~\ref{ass:uniformlocal} gives
\[
 \|\mathcal M_s v\|
 \le \|\mathcal E_s v\|+C_\ell h^5\|v\|
 \le \bigl(e^{M_Lh}+C_\ell h^5\bigr)\|v\|.
\]
Choose $C_1$ independent of $h$ and $\eps$ such that $e^{M_Lh}+C_\ell h^5\le e^{C_1h}$ for $0<h\le h_0$.  Iteration on the invariant slow range yields
\[
 \|\mathcal M_s^n v\|\le e^{C_1nh}\|v\|\le e^{C_1T}\|v\|.
\]
For a general vector $w$, $\mathcal M_s^nw=\mathcal M_s^n\Ps w$; using \eqref{eq:projbound} therefore gives
$\|\mathcal M_s^nw\|\le C_Pe^{C_1T}\|w\|$, proving \eqref{eq:Msstable}.
\end{proof}

\begin{theorem}[Full-state uniform fourth-order time accuracy]\label{thm:abstract-UA4}
Assume the spectral splitting \eqref{eq:proj}--\eqref{eq:slowbound} and the uniform slow local defect \eqref{eq:uniformlocal}.  Then for every $T>0$ there is a constant $C_T$, independent of $\eps$ and $h$, such that
\begin{equation}
 \max_{0\le nh\le T}
 \|\Rc(h,\eps)^nU^0-e^{nhL_\eps}U^0\|
 \le C_T h^4\|U^0\|.
 \label{eq:UA4abstract}
\end{equation}
The estimate holds for arbitrary initial data because the fast spectral component is advanced exactly.
\end{theorem}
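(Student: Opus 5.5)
The plan is to show that the corrected map and the exact flow both respect the slow--fast splitting, so that the full-state error reduces to the error of the slow block alone. That slow error is then handled by a standard telescoping (Lady Windermere) argument, using Assumption~\ref{ass:uniformlocal} for consistency and Lemma~\ref{lem:slow-stability} for stability.

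\emph{Step 1: block structure of the powers.} Since $\Ps$ and $\Pf$ are complementary projectors, $\Ps\Pf=\Pf\Ps=0$. They also commute with $L_\eps$, and hence with $E_\eps=e^{hL_\eps}$. The slow map $\mathcal M_s=\Ps S_\eps\Ps$ satisfies $\mathcal M_s\Pf=0$ and $\Pf\mathcal M_s=0$. The fast map $E_\eps\Pf$ satisfies $\Ps E_\eps\Pf=0$ and $E_\eps\Pf\Ps=0$. Therefore all cross products in $\Rc^n=(\mathcal M_s+E_\eps\Pf)^n$ vanish, and
\[
 \Rc^n=\mathcal M_s^n+E_\eps^n\Pf = \mathcal M_s^n+e^{nhL_\eps}\Pf .
\]
The exact flow decomposes in the same way: $e^{nhL_\eps}=e^{nhL_\eps}\Ps+e^{nhL_\eps}\Pf$, and $e^{nhL_\eps}\Ps=\mathcal E_s^n$. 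Hence the fast contributions cancel identically, and
\[
 \Rc^nU^0-e^{nhL_\eps}U^0=(\mathcal M_s^n-\mathcal E_s^n)U^0 .
\]
This cancellation is what makes the estimate valid for arbitrary data, including fast components that are not equilibrium-prepared.

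\emph{Step 2: telescoping.} I will use the identity
\[
 \mathcal M_s^n-\mathcal E_s^n=\sum_{k=0}^{n-1}\mathcal M_s^{\,n-1-k}(\mathcal M_s-\mathcal E_s)\mathcal E_s^{k}.
\]
The three factors are bounded as follows. Lemma~\ref{lem:slow-stability} gives $\|\mathcal M_s^{j}\|\le C_T$ for $jh\le T$. Assumption~\ref{ass:uniformlocal} gives $\|\mathcal M_s-\mathcal E_s\|\le C_\ell h^5$. For the exact slow powers, write $\mathcal E_s^k=e^{khL_s}\Ps$ and argue on $\operatorname{Ran}\Ps$ exactly as in the lemma, which gives $\|\mathcal E_s^k\|\le C_Pe^{M_LT}$. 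Summing the $n\le T/h$ terms yields the bound $C_TC_\ell C_Pe^{M_LT}\,T\,h^4\|U^0\|$, which is uniform in $\eps$.

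\emph{Step 3: remaining range of $h$.} The case $h_0<h$ with $nh\le T$ involves only finitely many steps of bounded size. I would either restrict the statement to $h\le h_0$, as is implicit in Assumption~\ref{ass:uniformlocal}, or note that the estimate is then trivial, since $h^4\ge h_0^4$ and both sides are bounded using the same stability bounds. I would state the restriction to $h\le h_0$ explicitly.

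\emph{Main obstacle.} The only delicate point is Step~1: the cross terms in $\Rc^n$ must vanish exactly, not merely be small. This requires $\mathcal M_s=\Ps S_\eps\Ps$ to be projected on both sides. The projection on the left is exactly what kills the base method's slow-to-fast leakage, and the fast block must use the exact semigroup. Everything else is the standard stability-plus-consistency argument, with the $\eps$-uniformity inherited from the two uniform hypotheses and from $\|L_s\|\le M_L$.
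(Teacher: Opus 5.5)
Your proposal is correct and matches the paper's proof essentially step for step. The exact block decomposition $\Rc^n=\mathcal M_s^n+e^{nhL_\eps}\Pf$ cancels the fast error identically, and the telescoping sum is then bounded by \lemref{lem:slow-stability}, Assumption~\ref{ass:uniformlocal}, and $\|\mathcal E_s^j\|\le C_Pe^{M_LT}$. Your explicit restriction to $h\le h_0$ is what the paper tacitly assumes, and you should keep it rather than the ``trivial'' alternative for $h>h_0$, which would need a uniform-in-$\eps$ bound on $S_\eps$ that the hypotheses do not supply.
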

\begin{proof}
Set $\mathcal E_f=e^{hL_\eps}\Pf$.  By the spectral commutation relations and $\Ps\Pf=\Pf\Ps=0$,
\[
 \Rc=\mathcal M_s+\mathcal E_f,
 \qquad
 \mathcal M_s\mathcal E_f=\mathcal E_f\mathcal M_s=0.
\]
Consequently
\begin{equation}
 \Rc^n=\mathcal M_s^n+e^{nhL_\eps}\Pf.
 \label{eq:Rpower}
\end{equation}
The exact flow decomposes as
\begin{equation}
 e^{nhL_\eps}=e^{nhL_s}\Ps+e^{nhL_\eps}\Pf.
 \label{eq:Epower}
\end{equation}
Subtracting \eqref{eq:Epower} from \eqref{eq:Rpower} gives the exact cancellation
\begin{equation}
 \Rc^n-e^{nhL_\eps}=\mathcal M_s^n-e^{nhL_s}\Ps.
 \label{eq:fastcancel}
\end{equation}
Thus there is no fast numerical error to propagate.

On the slow range, $\mathcal E_s^j=e^{jhL_s}\Ps$.  The exact telescoping identity is
\begin{equation}
 \mathcal M_s^n-\mathcal E_s^n
 =\sum_{j=0}^{n-1}\mathcal M_s^{n-1-j}
 (\mathcal M_s-\mathcal E_s)\mathcal E_s^j.
 \label{eq:telescoping}
\end{equation}
By \lemref{lem:slow-stability}, the first factor is bounded uniformly.  By \eqref{eq:slowbound} and \eqref{eq:projbound},
$\|\mathcal E_s^j\|\le C_Pe^{M_Ljh}\le C_Pe^{M_LT}$.  Assumption~\ref{ass:uniformlocal} therefore implies
\[
 \|\mathcal M_s^n-\mathcal E_s^n\|
 \le C_T\sum_{j=0}^{n-1}h^5
 \le C_Tnh^5
 \le C_T h^4,
\]
where the constant has been enlarged but remains independent of $h$ and $\eps$.  Combining this estimate with \eqref{eq:fastcancel} and applying the operator bound to $U^0$ proves \eqref{eq:UA4abstract}.
\end{proof}

\begin{remark}[What the abstract theorem does and does not prove]\label{rem:abstract-scope}
The theorem is a rigorous implication, but \eqref{eq:uniformlocal} is the model-specific burden.  The theorem does not claim that every compact IMEX method automatically satisfies that estimate, nor does it provide a nonlinear projector construction.  In the next section we verify \eqref{eq:uniformlocal} analytically for the linear Jin--Xin Fourier block.  Because the fast semigroup is exact in SFRC, no slow-manifold preparation assumption is needed in this linear benchmark; replacing the exact fast evolution by an approximation would require a separate initial-layer error analysis.
\end{remark}

\section{Rigorous verification for the linear Jin--Xin Fourier block}
Consider
\begin{equation}
 u_t+v_x=0,\qquad
 v_t+a^2u_x=\frac{cu-v}{\eps},
 \qquad a^2>c^2.
 \label{eq:JX}
\end{equation}
On a Fourier mode with derivative eigenvalue $d=i\kappa$, $\kappa\in\R$, the split matrices are
\begin{equation}
 A=\begin{pmatrix}0&-d\\-a^2d&0\end{pmatrix},\qquad
 B=\frac1\eps\begin{pmatrix}0&0\\c&-1\end{pmatrix},
 \label{eq:JXAB}
\end{equation}
and
\[
 L_\eps=A+B.
\]
The characteristic equation is
\begin{equation}
 \eps\lambda^2+\lambda+cd-\eps a^2d^2=0.
 \label{eq:char}
\end{equation}
Let
\begin{equation}
 \zeta(\eps,d)=\sqrt{1-4\eps cd+4\eps^2a^2d^2}
 \label{eq:zeta}
\end{equation}
with the square-root branch continuous from $\zeta(0,d)=1$.  The slow root can be evaluated without subtractive cancellation as
\begin{equation}
 \lambda_s(\eps,d)
 =\frac{2(\eps a^2d^2-cd)}{1+\zeta(\eps,d)},
 \qquad
 \lambda_f=-\eps^{-1}-\lambda_s.
 \label{eq:lams}
\end{equation}

\begin{assumption}[Non-coalescence on the resolved Fourier set]\label{ass:gap}
For a prescribed wavenumber bound $|\kappa|\le K$ and $0\le\eps\le\eps_0$, assume
\begin{equation}
 |\zeta(\eps,i\kappa)|\ge\gamma>0.
 \label{eq:gap}
\end{equation}
This excludes a slow--fast eigenvalue collision on the resolved parameter set.
\end{assumption}
For any fixed finite Fourier grid, \eqref{eq:gap} is a checkable finite-mode condition.  It is satisfied by all parameter sets used in Section 7.

\subsection{Uniformly bounded spectral projector}
Since
\[
 \lambda_s-\lambda_f=2\lambda_s+\eps^{-1}=\frac{\zeta}{\eps},
\]
the rank-one slow projector admits the scaled form
\begin{equation}
 \Ps
 =\frac{\eps L_\eps+(1+\eps\lambda_s)I}{\zeta}
 =\frac1\zeta
 \begin{pmatrix}
 1+\eps\lambda_s & -\eps d\\
 c-\eps a^2d & \eps\lambda_s
 \end{pmatrix}.
 \label{eq:PsJX}
\end{equation}

\begin{lemma}[Uniform spectral bounds]\label{lem:spectral-bounds}
Under \eqref{eq:gap}, for $|\kappa|\le K$ and $0\le\eps\le\eps_0$ there are constants $M_\lambda$ and $C_P$, independent of $\eps$, such that
\begin{equation}
 |\lambda_s(\eps,i\kappa)|\le M_\lambda,
 \qquad
 \|\Ps(\eps,i\kappa)\|+\|\Pf(\eps,i\kappa)\|\le C_P.
 \label{eq:spectralbounds}
\end{equation}
\end{lemma}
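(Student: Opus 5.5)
The argument is a compactness bound on the closed parameter set $\mathcal K=\{(\eps,\kappa):0\le\eps\le\eps_0,\ |\kappa|\le K\}$. We combine the cancellation-free formula \eqref{eq:lams} with the non-coalescence bound \eqref{eq:gap}.

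\textbf{Bound on $\lambda_s$.} With $d=i\kappa$, the numerator of \eqref{eq:lams} is $2(\eps a^2d^2-cd)=-2\eps a^2\kappa^2-2ic\kappa$, so its modulus is at most $2(\eps_0a^2K^2+|c|K)$. For the denominator we use that $\zeta$ has nonnegative real part. Indeed,
\[
\zeta^2=1+4\eps^2a^2d^2-4\eps cd=(1-4\eps^2a^2\kappa^2)-4i\eps c\kappa ,
\]
and $\zeta^2$ vanishes only when $1-4\eps^2a^2\kappa^2=0$ and $\eps c\kappa=0$. That would force $c=0$ or $\kappa=0$, and $\kappa=0$ gives $\zeta^2=1$. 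So a zero of $\zeta^2$ can occur only when $c=0$, and such points are excluded by \eqref{eq:gap} anyway.

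On $\mathcal K$, $\zeta$ is the branch continuous from $\zeta(0,d)=1$ and never vanishes there by \eqref{eq:gap}. We take it to be the principal square root, so $\Re\zeta\ge0$. Then $|1+\zeta|\ge\Re(1+\zeta)\ge1$, and
\[
|\lambda_s|\le 2(\eps_0a^2K^2+|c|K)=:M_\lambda .
\]
\emph{Main obstacle.} One must justify that the branch continuous from $\zeta(0,d)=1$ is indeed the principal one. For fixed $\kappa$, the path $\eps\mapsto\zeta^2$ stays in $\mathbb C\setminus(-\infty,0]$ whenever $c\kappa\neq0$, because its imaginary part is nonzero for $\eps>0$; so the continued branch coincides with the principal root. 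If $c\kappa=0$, then $\zeta^2=1-4\eps^2a^2\kappa^2$ is real. Crossing zero is forbidden by \eqref{eq:gap}, so $\zeta^2$ stays positive and $\zeta>0$.

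Even without the $\Re\zeta\ge0$ argument there is a fallback. Continuity of $\zeta$ and $|\zeta|\ge\gamma$ on the compact set $\mathcal K$ keep $1+\zeta$ away from $0$ whenever $\zeta\ne-1$, and $\zeta=-1$ is unreachable from $1$ without crossing $0$ if $\zeta^2$ stays real. In the complex case the principal-branch argument above applies.

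\textbf{Bound on the projectors.} Insert the bound on $\lambda_s$ into \eqref{eq:PsJX}. Each entry of the numerator matrix, namely $1+\eps\lambda_s$, $-\eps d$, $c-\eps a^2d$ and $\eps\lambda_s$, is bounded by a constant depending only on $\eps_0,a,c,K,M_\lambda$. Dividing by $|\zeta|\ge\gamma$ gives $\|\Ps\|\le C/\gamma$, with $C$ independent of $\eps$. Then $\|\Pf\|=\|I-\Ps\|\le1+\|\Ps\|$, which yields $C_P$.

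The projector formula itself is the standard rank-one spectral projector $(L_\eps-\lambda_fI)/(\lambda_s-\lambda_f)$. Multiplying numerator and denominator by $\eps$, and using $-\eps\lambda_f=1+\eps\lambda_s$ together with $\eps(\lambda_s-\lambda_f)=\zeta$, recovers \eqref{eq:PsJX}. At $\eps=0$ this formula extends continuously, with $\zeta=1$, which is consistent with including $\eps=0$ in the statement.
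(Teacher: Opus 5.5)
Your proposal is correct and follows the paper's proof: bound $|\lambda_s|$ through \eqref{eq:lams} using $\Re\zeta\ge0\Rightarrow|1+\zeta|\ge1$, bound the entries of \eqref{eq:PsJX} on the compact parameter set, divide by $|\zeta|\ge\gamma$, and then use $\Pf=I-\Ps$. Your argument that the branch continued from $\zeta(0,d)=1$ is the principal root (so $\Re\zeta\ge0$) supplies a step the paper only asserts; the ``fallback'' paragraph is unnecessary and, as written, does not give a uniform lower bound on $|1+\zeta|$, so you can drop it.
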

\begin{proof}
The chosen square-root branch has $\Re\zeta\ge0$, hence $|1+\zeta|\ge1$.  Formula \eqref{eq:lams} gives
\[
 |\lambda_s|
 \le2\bigl(\eps_0a^2K^2+|c|K\bigr)=:M_\lambda.
\]
Each numerator entry in \eqref{eq:PsJX} is then bounded uniformly on the compact parameter set.  Dividing by $|\zeta|\ge\gamma$ gives a uniform bound for $\Ps$ in any fixed matrix norm.  Finally $\Pf=I-\Ps$, so $\Pf$ is uniformly bounded as well.
\end{proof}

\subsection{Exact projected amplification factorization}
For $d\ne0$, a right slow eigenvector and a left slow eigenvector can be chosen as
\begin{equation}
 r_s=\begin{pmatrix}1\\-\lambda_s/d\end{pmatrix},\qquad
 \ell_s^T=\begin{pmatrix}-(\lambda_s+\eps^{-1})/d&1\end{pmatrix}.
 \label{eq:eigvecs}
\end{equation}
Then $\Ps=r_s\ell_s^T/(\ell_s^Tr_s)$.  Since the slow range is one-dimensional,
\begin{equation}
 \Ps S_\eps\Ps=q(h,\eps,d)\Ps,
 \qquad
 q=\frac{\ell_s^TS_\eps r_s}{\ell_s^Tr_s}.
 \label{eq:qdef}
\end{equation}
Define
\[
 \Tfour(z)=1+z+\frac{z^2}{2}+\frac{z^3}{6}+\frac{z^4}{24}.
\]
The following exact identity is the central algebraic step.

\begin{lemma}[Exact algebraic factorization with an independently checkable certificate]\label{lem:factorization}
Let $\lambda=\lambda_s(\eps,d)$ satisfy \eqref{eq:char}.  Then
\begin{equation}
 q(h,\eps,d)-\Tfour(h\lambda)
 =-\frac{h^5\lambda^3(cd+\lambda)\,N}
 {24(1+2\eps\lambda)D_8D_6},
 \label{eq:factor}
\end{equation}
where
\begin{align}
 D_8={}&cd\eps h^2-8\eps^2-4\eps h-h^2,
 \label{eq:D8}\\
 D_6={}&cd\eps h^2-6\eps^2-6\eps h-h^2,
 \label{eq:D6}
\end{align}
and
\begin{align}
 N={}&cd\eps^3h^3\lambda^2-2cd\eps^3h^2\lambda-12cd\eps^3h
      -cd\eps^2h^3\lambda-6cd\eps^2h^2-cd\eps h^3 \notag\\
 &-14\eps^4h\lambda^2+16\eps^4\lambda-4\eps^3h^2\lambda^2
   +8\eps^3h\lambda+8\eps^2h^2\lambda+12\eps^2h \notag\\
 &+2\eps h^3\lambda+6\eps h^2+h^3.
 \label{eq:N}
\end{align}
\end{lemma}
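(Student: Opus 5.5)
The identity is a finite rational computation, so the plan is to compute $q$ explicitly and reduce modulo the characteristic relation \eqref{eq:char}.

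\textbf{Reduction to scalar rational functions.} Rather than inverting the $2\times2$ matrices in \eqref{eq:pstar}--\eqref{eq:S} entrywise, work with the eigenvector $r_s$ and left eigenvector $\ell_s$ of \eqref{eq:eigvecs}. Since $Lr_s=\lambda r_s$, we have $AL r_s=\lambda A r_s$ and $BLr_s=\lambda Br_s$. Thus every product of the form $(\cdot)L$ applied to $r_s$ reduces to a multiple of $A$ or $B$.

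The inverses, however, involve $BL$ acting on general vectors. For these, use the explicit forms of $A$ and $B$ from \eqref{eq:JXAB}. The matrices $I-\frac h2B+\frac{h^2}{8}BL$ and $I-hB+\frac{h^2}6BL$ have first row $(1,0)$, because $B$ has zero first row. They are therefore lower triangular-like, and each inverse costs one scalar division by its $(2,2)$ entry.

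A direct check of these $(2,2)$ entries gives $1+\frac h{2\eps}+\frac{h^2}{8\eps}(\ldots)$, involving $-cd\eps h^2$. After multiplying by $-8\eps^2$, this matches $D_8$ in \eqref{eq:D8}; the analogous computation identifies $D_6$ in \eqref{eq:D6}. Compute $P_*r_s$ as a vector with denominator $D_8$, then $S_\eps r_s$ with denominator $D_8D_6$, and pair the result with $\ell_s$.

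The normalisation $\ell_s^Tr_s=-(\lambda+\eps^{-1})/d-\lambda/d=-(1+2\eps\lambda)/(\eps d)$ produces the factor $1+2\eps\lambda$. This factor equals $\zeta$, which is nonzero by Assumption~\ref{ass:gap}.

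\textbf{Algebraic reduction.} The quantity $q-\Tfour(h\lambda)$ is a rational function in $(h,\eps,c,d,a,\lambda)$. Eliminate $a^2d^2$ using \eqref{eq:char}, in the form $\eps a^2d^2=\eps\lambda^2+\lambda+cd$. Equivalently, reduce the numerator polynomial modulo \eqref{eq:char} regarded as a polynomial in $a^2$. The numerator of the difference, $\mathcal N$, is then a polynomial in $h,\eps,c d,\lambda$.

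Next, verify that $\mathcal N$ equals $-h^5\lambda^3(cd+\lambda)N/24$ by expanding both sides and comparing coefficients of monomials in $h,\eps,cd,\lambda$. The factor $h^5$ is consistent with \eqref{eq:fixedorder}. The factor $\lambda^3(cd+\lambda)$ is expected because $\lambda=0$ or $\lambda=-cd$ makes the slow mode degenerate; in that case $q=1$ or the dynamics become trivial.

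The certificate is independently checkable. Clearing denominators, the claim is the polynomial identity
\[
24(1+2\eps\lambda)D_8D_6\,\bigl(q-\Tfour\bigr)\cdot(\text{denominator})+h^5\lambda^3(cd+\lambda)N\cdot(\text{denominator})\equiv 0 \pmod{\eps\lambda^2+\lambda+cd-\eps a^2d^2}.
\]
This can be checked by polynomial division, and that is the form I would record.

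\textbf{Main obstacle.} The hard step is organizing the elimination so that the bookkeeping stays manageable. Terms with $d$ in denominators, coming from $r_s$ and $\ell_s$, must cancel. The reduction modulo the quadratic must also be done consistently, since $\lambda^2$ can be replaced either way.

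I would first use the $(1,0)$ first-row structure to write $S_\eps r_s$ with the scalar first component computed directly. I would then obtain the second component from the triangular solve, and reduce powers of $\lambda$ only at the end, to degree at most one in $a^2$. If hand computation becomes unwieldy, I would state the result as a symbolic certificate (Appendix), with the human-readable derivation covering the structural factors $h^5$, $1+2\eps\lambda$, and $D_8D_6$.
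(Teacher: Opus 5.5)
Your proposal is correct and follows essentially the paper's route: lower-triangular implicit matrices whose $(2,2)$ entries produce $D_8$ and $D_6$, the normalisation $\ell_s^Tr_s=-(1+2\eps\lambda)/(\eps d)$, elimination of $a^2d^2$ through the characteristic relation, and a polynomial certificate checked by coefficient comparison. The only difference is bookkeeping. The paper passes to the basis $(r_s,Br_s)$ and uses the characteristic relation once, when computing $A(Br_s)$, so that $\widehat A,\widehat B$ involve only $\lambda$, $cd$, $\eps$, and no $d$-denominators or $a^2$ ever appear. You instead stay in the standard basis and eliminate $a^2d^2$ at the end.
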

\begin{proof}
Write $\lambda=\lambda_s$ and introduce $w=Br_s$.  The characteristic relation
\begin{equation}
 a^2d^2=\lambda^2+\frac{\lambda+cd}{\eps}
 \label{eq:achar}
\end{equation}
shows that the two-dimensional space spanned by $r_s$ and $w$ is invariant under $A$, $B$, and $L_\eps$.  In the ordered basis $V=(r_s,w)$ the three operators are
\begin{equation}
 \widehat A=\begin{pmatrix}\lambda&-(cd+\lambda)/\eps\\-1&-\lambda\end{pmatrix},\qquad
 \widehat B=\begin{pmatrix}0&0\\1&-1/\eps\end{pmatrix},\qquad
 \widehat L=\widehat A+\widehat B.
 \label{eq:reduced-basis}
\end{equation}
Moreover, projection with the left slow eigenvector becomes the scalar functional
\begin{equation}
 \omega^T:=\frac{\ell_s^T V}{\ell_s^Tr_s}
 =\begin{pmatrix}1&-\dfrac{cd+\lambda}{1+2\eps\lambda}\end{pmatrix}.
 \label{eq:omega}
\end{equation}
Consequently $q=\omega^T\widehat S e_1$, where $\widehat S=S(h;\widehat A,\widehat B)$ is obtained from exactly the same two stage solves as \eqref{eq:pstar}--\eqref{eq:S}.  The implicit matrices are lower triangular in this basis and satisfy
\begin{equation}
 \det\widehat M_*= -\frac{D_8}{8\eps^2},\qquad
 \det\widehat M_{n+1}= -\frac{D_6}{6\eps^2}.
 \label{eq:det-certificate}
\end{equation}
Thus, after the scalar projection \eqref{eq:omega}, the reduced rational expression for $q$ has denominator exactly $24(1+2\eps\lambda)D_8D_6$.

It remains only to identify the numerator.  Clearing this denominator, define the polynomial certificate
\begin{equation}
 \mathcal R:=24(1+2\eps\lambda)D_8D_6\bigl(q-\Tfour(h\lambda)\bigr)
 +h^5\lambda^3(cd+\lambda)N.
 \label{eq:certificate}
\end{equation}
Substituting \eqref{eq:reduced-basis} into the two stage equations, evaluating the two displayed $2\times2$ triangular inverses, and using \eqref{eq:achar} reduces the assertion to the finite polynomial identity $\mathcal R\equiv0$.  Appendix~A records those intermediate matrices explicitly and shows how their determinants produce $D_8D_6$; after the common denominator is cleared, direct coefficient collection cancels every monomial and leaves the zero polynomial.  Thus the proof consists of the displayed analytical reduction to a finite polynomial identity; no floating-point fitting, asymptotic truncation, or numerically inferred cancellation is used.  Equation \eqref{eq:certificate} is precisely \eqref{eq:factor}.  For $d=0$, $\lambda=0$ and the identity follows by continuity.
\end{proof}

\subsection{Uniform bound across the distinguished scale}
The factor $h^5$ in \eqref{eq:factor} is not yet enough.  We must show that its coefficient does not grow when $\eps/h$ varies from zero to infinity.

\begin{lemma}[Uniform fifth-order slow local defect]\label{lem:uniform-local}
Under \eqref{eq:gap}, for $|\kappa|\le K$, $0<\eps\le\eps_0$, and $0<h\le h_0$, there is a constant $C$ independent of $h$ and $\eps$ such that
\begin{equation}
 |q(h,\eps,i\kappa)-e^{h\lambda_s(\eps,i\kappa)}|
 \le Ch^5.
 \label{eq:quniform}
\end{equation}
Consequently
\begin{equation}
 \|\Ps S_\eps\Ps-e^{hL_\eps}\Ps\|\le C h^5
 \label{eq:localJX}
\end{equation}
with a possibly larger constant independent of $h$ and $\eps$.
\end{lemma}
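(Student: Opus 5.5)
The plan is to split the defect as
\[
 q-e^{h\lambda_s}=\bigl(q-\Tfour(h\lambda_s)\bigr)+\bigl(\Tfour(h\lambda_s)-e^{h\lambda_s}\bigr).
\]
The second term is harmless. By \lemref{lem:spectral-bounds}, $|\lambda_s|\le M_\lambda$ uniformly, so the Taylor remainder of the exponential gives $|\Tfour(z)-e^z|\le C|z|^5$ on $|z|\le M_\lambda h_0$, hence it is $O(h^5)$ uniformly. All the work is therefore in the first term, which I would control through the exact factorization of \lemref{lem:factorization}. Each factor in \eqref{eq:factor} is bounded after substituting $\eps=xh$ with $x\in(0,\infty)$. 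Throughout, I use $|\lambda|\le M_\lambda$, $|d|\le K$ and $h\le h_0$.

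\emph{Denominators.} First, $1+2\eps\lambda_s=\zeta$ follows from $\lambda_s-\lambda_f=2\lambda_s+\eps^{-1}=\zeta/\eps$. By \eqref{eq:gap}, $|1+2\eps\lambda|\ge\gamma$. Next,
\[
 D_8/h^2=cd\eps-(8x^2+4x+1),\qquad D_6/h^2=cd\eps-(6x^2+6x+1).
\]
Since $d=i\kappa$ and $c$ is real, $cd\eps$ is purely imaginary. Hence $|D_8|\ge h^2(1+4x+8x^2)$ and $|D_6|\ge h^2(1+6x+6x^2)$, which gives
\[
 |D_8D_6|\ge c_0\,h^4(1+x)^4 .
\]
This is exactly the "strictly negative real part" mechanism the paper advertises.

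\emph{Numerator.} Rewrite $N$ from \eqref{eq:N} with $\eps=xh$ and separate the terms not carrying extra factors of $h\lambda$ or $h\,cd$:
\[
 N=h^3(1+6x+12x^2)+h^4P(x),
\]
where $P$ is a polynomial in $x$ of degree at most $4$. Its coefficients are bounded uniformly in terms of $\lambda$, $cd$ and $h_0$. For instance $16\eps^4\lambda=h^4x^4(16\lambda)$ and $-12cd\eps^3h=-12cd\,x^3h^4$. I still need to check this bookkeeping term by term on \eqref{eq:N}. The remaining factor is handled via the characteristic relation $\lambda+cd=\eps(a^2d^2-\lambda^2)$. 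It gives both $|cd+\lambda|\le C$ and $|cd+\lambda|\le C\eps=Cxh$, and $|\lambda^3|\le M_\lambda^3$.

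\emph{Combining.} For the $h^4P(x)$ part of $N$, I use $|cd+\lambda|\le C$. This gives a contribution
\[
 h^5\cdot h^4(1+x)^4\big/\bigl(h^4(1+x)^4\bigr)=O(h^5).
\]
For the $h^3(1+6x+12x^2)$ part, I use $|cd+\lambda|\le Cxh$. This gives
\[
 h^5\cdot h^4x(1+x)^2\big/\bigl(h^4(1+x)^4\bigr)\le Ch^5 .
\]
Thus $|q-\Tfour(h\lambda_s)|\le Ch^5$ uniformly for all $x\in(0,\infty)$, and \eqref{eq:quniform} follows. The case $d=0$ is trivial because there $\lambda_s=0$, and the continuity argument of \lemref{lem:factorization} covers it.

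\emph{Operator estimate.} By \eqref{eq:qdef} and $e^{hL_\eps}\Ps=e^{h\lambda_s}\Ps$,
\[
 \|\Ps S_\eps\Ps-e^{hL_\eps}\Ps\|=|q-e^{h\lambda_s}|\,\|\Ps\|\le C_PCh^5 ,
\]
using \lemref{lem:spectral-bounds}. This proves \eqref{eq:localJX}.

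The main obstacle I expect is the numerator bookkeeping. One must verify that the only $h$-order-three terms of $N$ grow at most like $x^2$, while every $x^3$ and $x^4$ term carries an additional factor of $h$. If this failed, the bound would only give $h^4$ at large $x$. It is precisely the interplay between the $O(\eps)$ smallness of $cd+\lambda$ and the $(1+x)^4$ growth of $D_8D_6$ that must be balanced there.
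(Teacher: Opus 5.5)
Your proof is correct and follows the paper's argument: the same split through $\Tfour$, the factorization of \lemref{lem:factorization}, the substitution $x=\eps/h$, the denominator lower bounds coming from the purely imaginary $cd\eps$, the gap bound on $1+2\eps\lambda$, and $cd+\lambda=O(\eps)$ from the characteristic equation. The bookkeeping you flagged does go through: the only $h^3$ terms of $N$ are $h^3(1+6x+12x^2)$, and every $x^3$ or $x^4$ term carries at least $h^4$. Your split of $N$ by powers of $h$, using $|cd+\lambda|\le C$ on the $h^4P(x)$ part, is only a minor variant of the paper's route, which instead bounds $|\widehat N|\le C_N(1+x^3)$ via $hx=\eps\le\eps_0$ and applies the factor $x$ from $cd+\lambda$ to all of $N$.
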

\begin{proof}
Set
\begin{equation}
 x=\frac\eps h\ge0,
 \qquad d=i\kappa.
 \label{eq:x}
\end{equation}
The characteristic equation gives the exact identity
\begin{equation}
 cd+\lambda=\eps(a^2d^2-\lambda^2)=xh(a^2d^2-\lambda^2).
 \label{eq:cdlambda}
\end{equation}
Write
\begin{equation}
 D_8=h^2\widehat D_8,
 \qquad
 D_6=h^2\widehat D_6,
 \end{equation}
where
\begin{align}
 \widehat D_8&=cdxh-(8x^2+4x+1),\\
 \widehat D_6&=cdxh-(6x^2+6x+1).
 \label{eq:Dhat}
\end{align}
Because $d=i\kappa$ and $c\in\R$, the first term in each expression is purely imaginary.  Hence
\begin{align}
 |\widehat D_8|&\ge8x^2+4x+1\ge1+x^2,\notag\\
 |\widehat D_6|&\ge6x^2+6x+1\ge1+x^2.
 \label{eq:Dlower}
\end{align}
Moreover $1+2\eps\lambda=\zeta$, so \eqref{eq:gap} gives
\begin{equation}
 |1+2\eps\lambda|\ge\gamma.
 \label{eq:gap2}
\end{equation}

Substitute $\eps=xh$ into \eqref{eq:N} and factor $h^3$:
\begin{equation}
 N=h^3\widehat N(x,h,\lambda),
 \label{eq:Nhatdef}
\end{equation}
with
\begin{align}
 \widehat N={}&cdh^3\lambda^2x^3-2cdh^2\lambda x^3-cdh^2\lambda x^2
 -12cdh x^3-6cdh x^2-cdhx \notag\\
 &-14h^2\lambda^2x^4-4h^2\lambda^2x^3
 +16h\lambda x^4+8h\lambda x^3+8h\lambda x^2+2h\lambda x
 +12x^2+6x+1.
 \label{eq:Nhat}
\end{align}
The spectral bound $|\lambda|\le M_\lambda$ and the constraint $hx=\eps\le\eps_0$ imply
\begin{equation}
 |\widehat N(x,h,\lambda)|\le C_N(1+x^3).
 \label{eq:Nbound}
\end{equation}
To see this term by term, powers such as $h x^4$ are written as $(hx)x^3\le\eps_0x^3$, $h^2x^4=(hx)^2x^2\le\eps_0^2x^2$, and $h^3x^3=(hx)^3\le\eps_0^3$; all remaining terms are lower powers of $x$ with bounded coefficients.

Using \eqref{eq:cdlambda}, \eqref{eq:Nhatdef}, and \eqref{eq:Dhat} in \eqref{eq:factor} yields
\begin{equation}
 q-\Tfour(h\lambda)=h^5\mathcal C(h,x,\lambda),
 \label{eq:Cdef}
\end{equation}
where
\begin{equation}
 \mathcal C=
 -\frac{\lambda^3x(a^2d^2-\lambda^2)\widehat N}
 {24(1+2\eps\lambda)\widehat D_8\widehat D_6}.
 \label{eq:Ccoef}
\end{equation}
The numerator apart from $\widehat N$ is uniformly bounded except for the explicit factor $x$.  Equations \eqref{eq:Dlower}--\eqref{eq:Nbound} give
\[
 |\mathcal C|
 \le C\frac{x(1+x^3)}{(1+x^2)^2}
 \le C\frac{x+x^4}{1+x^4}
 \le C.
\]
This proves
\begin{equation}
 |q-\Tfour(h\lambda)|\le Ch^5
 \label{eq:qT4}
\end{equation}
uniformly in $h$ and $\eps$.

Finally, the standard exponential remainder and $|\lambda|\le M_\lambda$ give
\[
 |e^{h\lambda}-\Tfour(h\lambda)|
 \le \frac{e^{h_0M_\lambda}M_\lambda^5}{5!}h^5.
\]
Combining this bound with \eqref{eq:qT4} proves \eqref{eq:quniform}.  Since
\[
 \Ps S_\eps\Ps=q\Ps,
 \qquad
 e^{hL_\eps}\Ps=e^{h\lambda}\Ps,
\]
\Lemref{lem:spectral-bounds} supplies a uniform bound on $\|\Ps\|$, and \eqref{eq:localJX} follows.
\end{proof}

This proof explicitly resolves all three coupled ranges $\eps\ll h$, $\eps=O(h)$, and $\eps\gg h$ in a single estimate.  The ratio $x=\eps/h$ is unbounded as $h\to0$ with fixed $\eps$, but the denominator grows like $x^4$, exactly balancing the largest allowed numerator growth.

\begin{corollary}[Uniform slow-block accuracy of the unchanged compact core]\label{cor:slow-core}
Under the hypotheses of \lemref{lem:uniform-local},
\begin{equation}
 \sup_{\substack{0<\eps\le\eps_0\\0<h\le h_0}}
 h^{-5}\bigl\|\Ps S_\eps\Ps-e^{hL_\eps}\Ps\bigr\|<\infty.
 \label{eq:slow-core-uniform}
\end{equation}
Hence, for data in the exact finite-$\eps$ slow eigenspace, applying the original compact two-stage IMEX core and projecting the completed step back to that same slow eigenspace has a one-step defect $O(h^5)$ with a constant uniform in $\eps/h$.
\end{corollary}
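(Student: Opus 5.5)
The corollary is essentially a restatement of \eqref{eq:localJX} in \lemref{lem:uniform-local}, read as a statement about the unchanged base step $S_\eps$ rather than about SFRC. I would proceed in three short steps.

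\textbf{Step 1: reduce to the scalar factor $q$.} By \eqref{eq:qdef}, $\Ps S_\eps\Ps=q(h,\eps,d)\Ps$, and since $\Ps$ commutes with $L_\eps$ and projects onto the $\lambda_s$-eigenspace, $e^{hL_\eps}\Ps=e^{h\lambda_s}\Ps$. Hence
\[
 \bigl\|\Ps S_\eps\Ps-e^{hL_\eps}\Ps\bigr\|\le |q-e^{h\lambda_s}|\,\|\Ps\|.
\]
Here $S_\eps=S(h;A,B_0/\eps)$ is the original compact core, with no SFRC modification. So the quantity in \eqref{eq:slow-core-uniform} is exactly the one bounded in \eqref{eq:localJX}.

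\textbf{Step 2: invoke the two uniform bounds.} I would take $\|\Ps\|\le C_P$ from \lemref{lem:spectral-bounds}. I would take $|q-e^{h\lambda_s}|\le Ch^5$, with $C$ independent of $h$ and $\eps$, from \eqref{eq:quniform}. The latter in turn rests on the factorization of \lemref{lem:factorization} and the $x=\eps/h$ bound $|\mathcal C|\le C$, which are already established. The product gives $h^{-5}\|\cdot\|\le CC_P$ on the full range $0<\eps\le\eps_0$, $0<h\le h_0$, which is the supremum claim.

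\textbf{Step 3: interpret the bound.} For data $U\in\operatorname{Ran}\Ps$ we have $U=\Ps U$. Therefore $\Ps S_\eps U-e^{hL_\eps}U=(\Ps S_\eps\Ps-e^{hL_\eps}\Ps)U$, and this is $O(h^5)\|U\|$ with a constant uniform in $\eps/h$. This is precisely the statement that one core step followed by projection has a uniform fifth-order defect.

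There is no real obstacle, since the substantive work (the uniform-in-$x$ control of $\mathcal C$) lives in \lemref{lem:uniform-local}. The only point needing care is to stress that the lemma's estimate concerns $S_\eps$ itself: SFRC enters only through the fast block, and the fast block does not appear in $\Ps S_\eps\Ps$.
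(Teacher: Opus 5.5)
Your proposal is correct and follows essentially the paper's own proof. The paper likewise identifies \eqref{eq:slow-core-uniform} with the operator estimate \eqref{eq:localJX} of \lemref{lem:uniform-local}, then uses $\Ps U=U$ for $U\in\operatorname{Ran}\Ps$ to transfer the bound to slow-eigenspace data; your Steps 1--2 simply re-derive the final line of that lemma (via $\Ps S_\eps\Ps=q\Ps$, \eqref{eq:quniform}, and the projector bound) instead of citing it.
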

\begin{proof}
Equation \eqref{eq:slow-core-uniform} is exactly the operator estimate \eqref{eq:localJX} of \lemref{lem:uniform-local}.  If $U\in\operatorname{Ran}\Ps$, then $\Ps U=U$, and therefore
\[
 \Ps S_\eps\Ps U-e^{hL_\eps}U
 =\bigl(\Ps S_\eps\Ps-e^{hL_\eps}\Ps\bigr)U,
\]
so the same uniform $O(h^5)$ bound applies to the slow-eigenspace one-step response.
\end{proof}

\begin{remark}[What the corollary identifies structurally]\label{rem:slow-core-meaning}
\Corref{cor:slow-core} does \emph{not} assert that the unprojected base method is uniformly fourth-order for arbitrary data.  It isolates a narrower and more informative fact: the original compact temporal core, when restricted to the correct finite-$\eps$ slow eigenspace, already has the required uniform fourth-order local response.  Thus the first-order strict-equilibrium map found for the unmodified method is not evidence of an intrinsic low-order defect in the slow-projected two-stage core.  The missing ingredient is the finite-$\eps$ slow--fast decomposition and the control of leakage/fast response outside that subspace; SFRC supplies exactly that information in the present linear benchmark.
\end{remark}

\begin{theorem}[Uniform fourth order for a Jin--Xin Fourier mode]\label{thm:JX-UA4}
Let $d=i\kappa$ with $|\kappa|\le K$, and assume \eqref{eq:gap}.  Then SFRC satisfies
\begin{equation}
 \max_{0\le nh\le T}
 \|\Rc(h,\eps)^nU^0-e^{nhL_\eps}U^0\|
 \le C_T h^4\|U^0\|,
 \label{eq:JXUA}
\end{equation}
for all $0<\eps\le\eps_0$ and sufficiently small $h$, with $C_T$ independent of $\eps$.  The result holds for arbitrary modal initial data.
\end{theorem}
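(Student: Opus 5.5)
The plan is to obtain \eqref{eq:JXUA} as a direct application of \thmref{thm:abstract-UA4}. Nothing new has to be estimated: we only check that the Jin--Xin Fourier block satisfies the spectral splitting assumption \eqref{eq:proj}--\eqref{eq:slowbound} and the uniform slow local defect \eqref{eq:uniformlocal}, with constants independent of $\eps$.

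\textbf{Spectral splitting.} Under \eqref{eq:gap}, $\zeta\neq0$, so $\lambda_s\ne\lambda_f$. The rank-one projector $\Ps$ in \eqref{eq:PsJX} is then the Riesz projector onto the slow eigenline, and $\Pf=I-\Ps$ projects onto the fast one. Both are polynomials in $L_\eps$, so both commute with $L_\eps$, which gives \eqref{eq:proj}. \Lemref{lem:spectral-bounds} supplies the uniform projector bound \eqref{eq:projbound}.

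\textbf{Slow generator bound.} The slow generator is $L_s=\Ps L_\eps\Ps=\lambda_s\Ps$. Hence
\[
\|L_s\|\le M_\lambda C_P=:M_L,
\]
again by \lemref{lem:spectral-bounds}, which is \eqref{eq:slowbound}. The one point needing care is that the spectral bounds were proved for $0\le\eps\le\eps_0$ and $|\kappa|\le K$. Since the mode is fixed with $|\kappa|\le K$, these bounds apply directly.

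\textbf{Uniform slow local defect.} \Lemref{lem:uniform-local}, specifically \eqref{eq:localJX}, gives
\[
\|\Ps S_\eps\Ps-e^{hL_\eps}\Ps\|\le C h^5
\]
for $0<h\le h_0$, uniformly in $\eps$. Since $\mathcal M_s=\Ps S_\eps\Ps$ and $\mathcal E_s=e^{hL_\eps}\Ps$, this is exactly \eqref{eq:uniformlocal}.

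\textbf{Conclusion.} With these three hypotheses verified, \lemref{lem:slow-stability} and \thmref{thm:abstract-UA4} give \eqref{eq:JXUA} with $C_T$ independent of $\eps$ and $h$, for $h\le h_0$; this is the meaning of "sufficiently small $h$". Arbitrary modal data are allowed because of the exact fast cancellation \eqref{eq:fastcancel}.

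\textbf{Main obstacle.} The substantive difficulty, the uniform $h^5$ defect across all values of $x=\eps/h$, has already been handled in \lemref{lem:uniform-local}. What remains to check carefully is:
\begin{itemize}
\item the norm used in \lemref{lem:slow-stability} is the same fixed matrix norm in which $\Ps$ is bounded;
\item the smallness threshold $h_0$ needed there, namely $e^{M_Lh}+C_\ell h^5\le e^{C_1h}$, does not depend on $\eps$.
\end{itemize}
Both hold because $M_L$ and $C_\ell$ are $\eps$-independent.
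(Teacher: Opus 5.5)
Your proposal is correct and matches the paper's proof: the paper also invokes \thmref{thm:abstract-UA4} directly, using \lemref{lem:spectral-bounds} for the projector and slow-generator bounds and \lemref{lem:uniform-local} for the uniform $O(h^5)$ slow defect. The paper adds, for completeness, the explicit rank-one form $\Rc^n-e^{nhL_\eps}=(q^n-e^{nh\lambda_s})\Ps$ with a scalar telescoping bound, which is the same argument specialized to the one-dimensional slow range.
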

\begin{proof}
\Lemref{lem:spectral-bounds} verifies the uniformly bounded projectors and slow generator required by \thmref{thm:abstract-UA4}; \lemref{lem:uniform-local} verifies its uniform fifth-order slow local defect.  Hence \thmref{thm:abstract-UA4} applies directly.  For completeness, in the present rank-one setting one can see the global estimate even more explicitly.  The corrected and exact one-step maps are
\[
 \Rc=q\Ps+e^{h\lambda_f}\Pf,
 \qquad
 e^{hL_\eps}=e^{h\lambda_s}\Ps+e^{h\lambda_f}\Pf.
\]
Thus
\[
 \Rc^n-e^{nhL_\eps}
 =(q^n-e^{nh\lambda_s})\Ps.
\]
From \eqref{eq:quniform}, $|q|\le e^{hM_\lambda}+Ch^5\le e^{C_1h}$ for small $h$.  The scalar telescoping identity gives
\[
 |q^n-e^{nh\lambda_s}|
 \le n\max(|q|,|e^{h\lambda_s}|)^{n-1}|q-e^{h\lambda_s}|
 \le C_Tnh^5\le C_Th^4.
\]
Multiplication by the uniformly bounded projector proves \eqref{eq:JXUA}.
\end{proof}

\begin{remark}[Exact content of the Jin--Xin result]\label{rem:JX-scope}
The uniformity in \thmref{thm:JX-UA4} is with respect to $0<\eps\le\eps_0$ and the coupled ratio $\eps/h$ for Fourier modes in a fixed bounded set $|\kappa|\le K$.  The constant may depend on $K$, $a$, $c$, $T$, and the non-coalescence constant $\gamma$.  No estimate uniform in an increasing spatial cutoff is asserted, and no nonlinear relaxation theorem is inferred from the modal calculation.
\end{remark}

\begin{corollary}[Fixed Fourier grid]\label{cor:JX-grid}
For a Fourier spectral discretization of \eqref{eq:JX} with a fixed finite set of wavenumbers $|\kappa|\le K$, if \eqref{eq:gap} holds for every resolved mode, then the SFRC time discretization is fourth-order accurate uniformly for $0<\eps\le\eps_0$.  The constant may depend on the fixed Fourier cutoff (and hence on the fixed spatial grid) but not on $\eps$; no uniform-in-$K$ PDE estimate is claimed.
\end{corollary}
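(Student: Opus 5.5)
The plan is to reduce the grid statement to \thmref{thm:JX-UA4} applied mode by mode, and then reassemble the modes through Parseval. The Fourier spectral semidiscretization of \eqref{eq:JX} on a fixed grid is a finite direct sum of $2\times2$ modal systems $\hat U_t=L_\eps(d)\hat U$ with $d=i\kappa$, over the finite index set $\mathcal K$ of resolved wavenumbers with $|\kappa|\le K$. Every ingredient of SFRC acts diagonally in this Fourier basis: the stage formulas \eqref{eq:stage}--\eqref{eq:final} are linear with constant coefficients, and the spectral projectors and fast semigroup are defined modewise. Hence the global SFRC step is block diagonal, with block $\Rc(h,\eps;d)$ on each mode, and the exact flow is block diagonal with block $e^{hL_\eps(d)}$. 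The first thing to check is that the global $\Ps$ is the direct sum of the modal projectors \eqref{eq:PsJX}, which is how SFRC is defined on the grid.

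For each $\kappa\in\mathcal K$, the hypothesis that \eqref{eq:gap} holds at that mode is exactly what \thmref{thm:JX-UA4} requires. It gives, for every $0<\eps\le\eps_0$ and $0<h\le h_0(\kappa)$,
\[
\max_{0\le nh\le T}\|\Rc(h,\eps;i\kappa)^n\hat U^0_\kappa-e^{nhL_\eps(i\kappa)}\hat U^0_\kappa\|\le C_T(\kappa)h^4\|\hat U^0_\kappa\|.
\]
Since $\mathcal K$ is finite, set $h_0=\min_\kappa h_0(\kappa)>0$ and $C_T=\max_\kappa C_T(\kappa)$. Both are independent of $\eps$ because each modal constant is, although they depend on $K$ and the grid. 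It is cleaner to take $\gamma$ as the minimum over the finitely many modes of the modal lower bounds in \eqref{eq:gap}. Then the constants in \lemref{lem:spectral-bounds} and \lemref{lem:uniform-local}, which depend only on $K,a,c,\eps_0,\gamma,T$, apply to all modes at once.

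To finish, I would sum the squared modal errors and use Parseval in the discrete $\ell^2$ norm. The maximum over $n$ of the sum is bounded by the sum of the maxima, so the sum is at most $C_T^2h^8\sum_\kappa\|\hat U^0_\kappa\|^2$. Taking square roots gives the full-grid estimate $C_Th^4\|U^0\|$, uniformly in $0<\eps\le\eps_0$. If another norm is preferred, the equivalence constants of a fixed finite-dimensional space depend only on the grid, which is allowed. There is no genuine obstacle here. The one point needing care is the $\kappa=0$ mode, where $d=0$ and \eqref{eq:eigvecs} is undefined. Its block is triangular with $\lambda_s=0$, and the factorization was extended to it by continuity in \lemref{lem:factorization}. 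Alternatively, the identity $\Ps S_\eps\Ps=q\Ps$ can be checked directly there, giving $q=1=e^{0}$ exactly, because the $u$-equation is trivial and the base method preserves constants.
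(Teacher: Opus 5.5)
Your proposal is correct and follows essentially the same route as the paper: apply \thmref{thm:JX-UA4} to each resolved mode, take the maximum of the finitely many modal constants (and the minimum of the step thresholds), and transfer the bound to the discrete $L^2$ norm by Parseval. Your extra checks are correct details that the paper leaves implicit or handles by continuity: SFRC is block diagonal on the grid, and at the $\kappa=0$ mode $r_s=(1,c)^T\in\ker L_\eps$, so every stage operator acts as the identity and $q=1$.
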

\begin{proof}
Apply \thmref{thm:JX-UA4} to each resolved Fourier mode and take the maximum of the finitely many modewise constants.  Parseval's identity transfers the estimate to the discrete $L^2$ norm.
\end{proof}

\subsection{Strict relaxation limit}
The slow eigenvalue and projector have the limits
\begin{equation}
 \lambda_s\to-cd,
 \qquad
 \Ps\to
 \begin{pmatrix}1&0\\c&0\end{pmatrix}
 =:P_0,
 \qquad \eps\to0.
 \label{eq:strictproj}
\end{equation}
Equation \eqref{eq:qT4} and the factorization show more precisely that, for fixed $h>0$,
\begin{equation}
 q(h,\eps,d)\longrightarrow\Tfour(-cdh).
 \label{eq:qlimit}
\end{equation}
At the same time $\Re\lambda_f\sim-1/\eps$, so $e^{h\lambda_f}\to0$.  Therefore
\begin{equation}
 \Rc(h,\eps)\longrightarrow \Tfour(-cdh)P_0.
 \label{eq:strictR}
\end{equation}
The strict limiting slow map is the fourth-order Taylor method for $u_t=-cd\,u$.  Thus SFRC is AA4 at the strict endpoint.  Notice that this conclusion follows from the same finite-$\eps$ slow response used in the uniform proof; no separate equilibrium-only endpoint switch is required.

\section{Two-dimensional modewise extension}
Consider the periodic three-variable relaxation system
\begin{equation}
 \begin{aligned}
 u_t+v_x+q_y&=0,\\
 v_t+a_x^2u_x&=\frac{c_xu-v}{\eps},\\
 q_t+a_y^2u_y&=\frac{c_yu-q}{\eps}.
 \end{aligned}
 \label{eq:2D}
\end{equation}
For one Fourier mode $(d_x,d_y)=(i\kappa_x,i\kappa_y)$, define
\begin{equation}
 s=c_xd_x+c_yd_y,
 \qquad
 r=a_x^2d_x^2+a_y^2d_y^2.
 \label{eq:sr}
\end{equation}
Let $p=d_xv+d_yq$.  Applying the row map
\[
 \mathcal T U=\begin{pmatrix}u\\p\end{pmatrix}
\]
to the split matrices gives closed reduced matrices
\begin{equation}
 \bar A=\begin{pmatrix}0&-1\\-r&0\end{pmatrix},\qquad
 \bar B=\frac1\eps\begin{pmatrix}0&0\\s&-1\end{pmatrix},
 \label{eq:ABbar}
\end{equation}
with the intertwining identities
\begin{equation}
 \mathcal T A=\bar A\mathcal T,
 \qquad
 \mathcal T B=\bar B\mathcal T.
 \label{eq:intertwineAB}
\end{equation}
The slow eigenvalue therefore solves
\begin{equation}
 \eps\lambda^2+\lambda+s-\eps r=0.
 \label{eq:2Dchar}
\end{equation}
Introduce the branch
\begin{equation}
 \zeta_2(\eps,d_x,d_y)=\sqrt{1-4\eps s+4\eps^2r},
 \qquad \Re\zeta_2\ge0,
 \label{eq:zeta2}
\end{equation}
continued from $\zeta_2(0)=1$, and write
\begin{equation}
 \lambda_s=\frac{2(\eps r-s)}{1+\zeta_2},
 \qquad
 \theta:=1+\eps\lambda_s=\frac{1+\zeta_2}{2}.
 \label{eq:2Dslowroot}
\end{equation}
For a prescribed finite Fourier grid we impose the directly checkable non-coalescence condition
\begin{equation}
 |\zeta_2(\eps,i\kappa_x,i\kappa_y)|\ge\gamma_2>0,
 \qquad 0\le\eps\le\eps_0,
 \label{eq:2Dgap}
\end{equation}
for every resolved mode.  This is the two-dimensional analogue of \eqref{eq:gap}.

\begin{lemma}[Uniform full slow projector for the two-dimensional block]\label{lem:2Dprojector}
On a fixed Fourier grid satisfying \eqref{eq:2Dgap}, the full $3\times3$ slow spectral projector is uniformly bounded for $0<\eps\le\eps_0$, and the corresponding slow generator is uniformly bounded.
\end{lemma}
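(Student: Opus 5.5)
The plan is to construct the slow projector of the $3\times3$ mode explicitly as a rank-one product $r_s\ell_s^T/(\ell_s^Tr_s)$. The intertwining identities \eqref{eq:intertwineAB} then reduce its normalisation to the $2\times2$ quantity already controlled by \eqref{eq:2Dgap}.

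\textbf{Setup.} On one mode the full matrices are
\[
A=\begin{pmatrix}0&-d_x&-d_y\\-a_x^2d_x&0&0\\-a_y^2d_y&0&0\end{pmatrix},\qquad
B=\frac1\eps\begin{pmatrix}0&0&0\\c_x&-1&0\\c_y&0&-1\end{pmatrix}.
\]
The spectrum of $L_\eps$ consists of three parts. Two eigenvalues are the roots $\lambda_s,\lambda_f$ of \eqref{eq:2Dchar}, inherited through $\mathcal T$. The third is $-1/\eps$, carried by $\ker\mathcal T$: for $p=0$ and $u=0$ the last two rows reduce to $-v/\eps,\,-q/\eps$.

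The slow eigenvalue is simple and separated from the other two, for the following reasons.
\begin{itemize}
\item It differs from $\lambda_f$ because $\lambda_s-\lambda_f=\zeta_2/\eps\neq0$ by \eqref{eq:2Dgap}.
\item It differs from $-1/\eps$ because $\lambda_s+1/\eps=\theta/\eps$, and $|\theta|=|1+\zeta_2|/2\ge\tfrac12$ since $\Re\zeta_2\ge0$.
\end{itemize}
Hence $\Ps$ is the rank-one spectral projector onto the $\lambda_s$-eigenline, and $\Pf=I-\Ps$.

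\textbf{Right and left eigenvectors.} I will take the right eigenvector with $u=1$. Solving rows two and three of $(L_\eps-\lambda_s)r_s=0$ gives
\[
r_s=\Bigl(1,\ \frac{c_x-\eps a_x^2d_x}{\theta},\ \frac{c_y-\eps a_y^2d_y}{\theta}\Bigr)^T.
\]
Every entry is uniformly bounded on the finite grid for $0\le\eps\le\eps_0$, because $|\theta|\ge\tfrac12$.

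For the left eigenvector I lift the reduced one. In $\bar L=\bar A+\bar B$ the row vector $\bar\ell^T=(-\theta,\ \eps)$ is a left eigenvector for $\lambda_s$; this follows from the second column, $-\alpha-1/\eps=\lambda_s$, after scaling by $\eps$. By \eqref{eq:intertwineAB}, $\ell_s^T:=\bar\ell^T\mathcal T=(-\theta,\ \eps d_x,\ \eps d_y)$ satisfies $\ell_s^TL_\eps=\lambda_s\ell_s^T$. Its entries are bounded as well.

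\textbf{The key step: the normalisation.} This is the step I expect to be the main obstacle, namely showing that $|\ell_s^Tr_s|$ is bounded below uniformly. The route is again through $\mathcal T$.
\begin{itemize}
\item $\mathcal Tr_s=(1,p)^T$ is a right eigenvector of $\bar L$ with first entry $1$.
\item The first row of $\bar L$ then forces $p=-\lambda_s$. The same conclusion follows directly: $-\lambda_s\theta=s-\eps r$ by \eqref{eq:2Dchar}, and $p=(s-\eps r)/\theta$.
\end{itemize}
Therefore
\[
\ell_s^Tr_s=\bar\ell^T\mathcal Tr_s=-\theta-\eps\lambda_s=-(1+2\eps\lambda_s)=-\zeta_2,
\]
which gives $|\ell_s^Tr_s|\ge\gamma_2$ by \eqref{eq:2Dgap}. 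I would double-check this identity symbolically, since every later bound depends on it.

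\textbf{Conclusion.} Each entry of $\Ps=r_s\ell_s^T/(\ell_s^Tr_s)$ is a bounded numerator divided by a quantity of modulus at least $\gamma_2$. This holds on the compact set $\{0\le\eps\le\eps_0\}\times\{\text{finitely many modes}\}$, so $\Ps$ is uniformly bounded, and $\|\Pf\|\le1+\|\Ps\|$.

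For the slow generator, $L_s=\Ps L_\eps\Ps=\lambda_s\Ps$. The argument of \lemref{lem:spectral-bounds} applies verbatim to \eqref{eq:2Dslowroot}: $|1+\zeta_2|\ge1$ gives $|\lambda_s|\le2(\eps_0|r|+|s|)$, bounded on the grid. Hence $\|L_s\|\le M_\lambda C_P$ uniformly, which completes Assumption~\ref{ass:spectral} for the two-dimensional block.
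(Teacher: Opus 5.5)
Your proof is correct and follows essentially the paper's argument. You use the same explicit rank-one formula $\Ps=r_s\ell_s^T/(\ell_s^Tr_s)$ with the identical right eigenvector, bound the numerators via $|\theta|\ge\tfrac12$ and $|\lambda_s|\le2(\eps_0|r|+|s|)$, control the normalisation with \eqref{eq:2Dgap}, and use $\Ps L_\eps\Ps=\lambda_s\Ps$ for the generator. The differences are cosmetic: your left eigenvector $(-\theta,\eps d_x,\eps d_y)$, lifted through $\mathcal T$, is $-\theta$ times the paper's, so your pairing $-\zeta_2$ matches the paper's $\zeta_2/\theta$, and you also make explicit the separation of $\lambda_s$ from the third eigenvalue $-1/\eps$, which the paper leaves implicit.
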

\begin{proof}
Let $L_\eps=A+B$ denote the full $3\times3$ Fourier matrix.  Direct substitution into $L_\eps r_s=\lambda_s r_s$ and $\ell_s^TL_\eps=\lambda_s\ell_s^T$ gives the right and left slow eigenvectors
\[
 r_s=\begin{pmatrix}
 1\\[2pt]
 (c_x-\eps a_x^2d_x)/\theta\\[2pt]
 (c_y-\eps a_y^2d_y)/\theta
 \end{pmatrix},
 \qquad
 \ell_s^T=\begin{pmatrix}
 1 & -\eps d_x/\theta & -\eps d_y/\theta
 \end{pmatrix}.
\]
Using \eqref{eq:2Dchar},
\[
 \ell_s^Tr_s
 =1-\frac{\eps(s-\eps r)}{\theta^2}
 =\frac{1+2\eps\lambda_s}{1+\eps\lambda_s}
 =\frac{\zeta_2}{\theta}.
\]
Hence the exact rank-one projector is
\begin{equation}
 \Ps^{(2D)}=\frac{\theta}{\zeta_2}\,r_s\ell_s^T.
 \label{eq:2Dprojector}
\end{equation}
Because $\Re\zeta_2\ge0$, $|1+\zeta_2|\ge1$, so $|\theta|\ge1/2$.  On a fixed Fourier grid, $|s|$ and $|r|$ are bounded, and \eqref{eq:2Dslowroot} therefore gives a bound $|\lambda_s|\le M_{\lambda,2}$ independent of $\eps$.  Thus $|\theta|$ is also bounded from above, all entries of $r_s$ and $\ell_s$ are uniformly bounded, and \eqref{eq:2Dgap} bounds the remaining factor $1/|\zeta_2|$.  Consequently $\|\Ps^{(2D)}\|\le C_{P,2}$ uniformly in $\eps$, and the complementary projector is bounded because $\Pf^{(2D)}=I-\Ps^{(2D)}$.  Finally,
\[
 \Ps^{(2D)}L_\eps\Ps^{(2D)}=\lambda_s\Ps^{(2D)},
\]
so the slow generator is uniformly bounded as well.
\end{proof}

\begin{lemma}[Intertwining of the compact base step]\label{lem:intertwine}
Let $S(h;A,B)$ and $S(h;\bar A,\bar B)$ be defined by \eqref{eq:pstar}--\eqref{eq:S}.  If \eqref{eq:intertwineAB} holds, then
\begin{equation}
 \mathcal T S(h;A,B)=S(h;\bar A,\bar B)\mathcal T.
 \label{eq:intertwineS}
\end{equation}
\end{lemma}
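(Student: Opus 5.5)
The plan is to push $\mathcal T$ through the explicit formulas \eqref{eq:pstar}--\eqref{eq:S} one factor at a time, using only the two intertwining identities \eqref{eq:intertwineAB} and their algebraic consequences.

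\emph{First step: polynomials.} By induction on word length, $\mathcal T W(A,B)=W(\bar A,\bar B)\mathcal T$ for every noncommutative word $W$. Hence the same holds for every noncommutative polynomial. In particular, with $L=A+B$ and $\bar L=\bar A+\bar B$, one has $\mathcal T L=\bar L\mathcal T$. Likewise
\[
 \mathcal T\Bigl(I+\tfrac h2A+\tfrac{h^2}{8}AL\Bigr)
 =\Bigl(I+\tfrac h2\bar A+\tfrac{h^2}{8}\bar A\bar L\Bigr)\mathcal T ,
\]
and the analogous identities hold for the implicit matrices $M_*=I-\tfrac h2B+\tfrac{h^2}{8}BL$ and $M_{n+1}=I-hB+\tfrac{h^2}{6}BL$, and for the explicit bracket in \eqref{eq:S} apart from its $P_*$ factor. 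Here $I$ on the left is $I_3$ and on the right is $I_2$; this causes no trouble because $\mathcal T I_3=I_2\mathcal T$.

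\emph{Second step: inverses.} Suppose $\mathcal T M=\bar M\mathcal T$ and both $M$ and $\bar M$ are invertible. Multiplying by $\bar M^{-1}$ on the left and $M^{-1}$ on the right gives $\bar M^{-1}\mathcal T=\mathcal T M^{-1}$. Applying this to $M_*$ gives $\mathcal T P_*=\bar P_*\mathcal T$. Then the bracket, which contains $(A-B)LP_*$, intertwines because products of intertwined factors intertwine. Applying the inverse rule once more to $M_{n+1}$ yields \eqref{eq:intertwineS}.

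\emph{Main obstacle: invertibility.} The only delicate point is that $\bar M_*$ and $\bar M_{n+1}$ must be invertible whenever the full stage matrices are, since the base step is assumed well defined. For the reduced matrices I would compute directly, as in \eqref{eq:det-certificate}. Since $\bar B$ has first row zero, $\bar M$ is lower-triangular-like, and
\[
 \det\bar M_* = -\frac{\bar D_8}{8\eps^2},\qquad \bar D_8=s\eps h^2-8\eps^2-4\eps h-h^2 .
\]
This is nonzero for $d$ purely imaginary because its real part is strictly negative; the same argument handles $\bar M_{n+1}$.

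For the full $3\times3$ matrices, I would argue as follows. $\mathcal T$ is onto $\mathbb C^2$ (for $d\neq0$), and $\ker\mathcal T$ is $A$- and $B$-invariant. On $\ker\mathcal T$ the matrix $M$ acts by some block whose invertibility follows from the block-triangular structure: $\det M=\det\bar M\cdot\det(M|_{\ker\mathcal T})$. If needed, I would simply assume $h$ is small enough, or note that the kernel block is a scalar of the form $1+\tfrac{h}{2\eps}+\tfrac{h^2}{8\eps}(\cdots)$ with positive real part.

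If instead invertibility of $M$ is taken as given, the determinant factorization also gives invertibility of $\bar M$, since $\det\bar M$ divides $\det M$ (take $\mathcal T$ surjective and complete it to a basis). For $d_x=d_y=0$, $\mathcal T$ is not surjective, but then $\bar A=\bar B\cdot 0+\ldots$ is an explicit lower-triangular matrix and direct inspection suffices.
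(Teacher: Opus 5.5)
Your proof is correct and follows the paper's route: intertwine every polynomial in $A,B$ (in particular $L$, $AL$, $BL$), pass $\mathcal T$ through each implicit inverse via $\bar M^{-1}\mathcal T=\mathcal T M^{-1}$, and apply this first to the midpoint solve and then to the final solve. The paper simply takes the corresponding stage matrices to be invertible, so your invertibility check is a correct supplement rather than a different approach. That check uses the explicit $\det\bar M_*=-\bar D_8/(8\eps^2)$ with negative real part, the same argument for $\bar M_{n+1}$, and $\det M=\det\bar M\cdot\det(M|_{\ker\mathcal T})$; your $d_x=d_y=0$ aside is garbled but unnecessary, since the direct determinant computation never used surjectivity of $\mathcal T$.
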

\begin{proof}
Set $L=A+B$ and $\bar L=\bar A+\bar B$.  Equation \eqref{eq:intertwineAB} implies $\mathcal TL=\bar L\mathcal T$, and hence it also intertwines every product appearing in the stage matrices, for example $\mathcal TBL=\bar B\bar L\mathcal T$.  If $M$ and $\bar M$ are corresponding invertible stage matrices with $\mathcal TM=\bar M\mathcal T$, then multiplication by inverses gives $\mathcal TM^{-1}=\bar M^{-1}\mathcal T$.  Applying this observation first to the midpoint solve \eqref{eq:pstar} and then to the final solve \eqref{eq:S} proves \eqref{eq:intertwineS}.
\end{proof}

The reduced pair \eqref{eq:ABbar} is algebraically the same as the Jin--Xin $2\times2$ block with $cd$ replaced by $s$ and $a^2d^2$ replaced by $r$.  Because $s$ is purely imaginary for real coefficients and Fourier wavenumbers, the denominator estimates \eqref{eq:Dlower} remain valid verbatim.

\begin{theorem}[Modewise two-dimensional uniform fourth order]\label{thm:2D-UA4}
Fix a finite Fourier grid for \eqref{eq:2D} and assume \eqref{eq:2Dgap} for every resolved mode.  Then the Fourier-semidiscrete SFRC solution is fourth-order accurate in time uniformly in $0<\eps\le\eps_0$ on that fixed grid.  The theorem is a time-discretization result at fixed spatial resolution; no constant uniform in the Fourier cutoff is asserted.
\end{theorem}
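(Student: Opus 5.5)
The plan is to reduce \thmref{thm:2D-UA4} to the abstract \thmref{thm:abstract-UA4}, applied separately to each resolved mode, and then to take a maximum over the finitely many modes. For one mode, \lemref{lem:2Dprojector} already supplies the uniformly bounded projectors \eqref{eq:projbound} and the bounded slow generator \eqref{eq:slowbound}. What remains is the uniform slow local defect \eqref{eq:uniformlocal} for the full $3\times3$ block. Since $\Ps^{(2D)}$ has rank one,
\[
 \Ps^{(2D)}S_\eps\Ps^{(2D)}=q_3\,\Ps^{(2D)},\qquad q_3=\frac{\ell_s^TS_\eps r_s}{\ell_s^Tr_s},
\]
so it suffices to show $|q_3-e^{h\lambda_s}|\le Ch^5$ uniformly in $\eps$.

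To compute $q_3$, I will use the intertwining \lemref{lem:intertwine}. The left slow eigenvector of the full matrix should factor through $\mathcal T$, namely
\[
 \ell_s^T=\bar\ell_s^T\mathcal T,
\]
where $\bar\ell_s^T$ is the left slow eigenvector of $\bar L=\bar A+\bar B$. Indeed $\bar\ell_s^T\mathcal T L=\bar\ell_s^T\bar L\mathcal T=\lambda_s\bar\ell_s^T\mathcal T$, and this covector is nonzero. It also matches the displayed $\ell_s^T=(1,-\eps d_x/\theta,-\eps d_y/\theta)$, which is of the form $(\alpha,\beta d_x,\beta d_y)$. Likewise $\bar r_s:=\mathcal T r_s$ is a right slow eigenvector of $\bar L$, and it is nonzero because its first entry is $1$. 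Then
\[
 \ell_s^TS_\eps r_s=\bar\ell_s^T\mathcal T S_\eps r_s=\bar\ell_s^T\bar S_\eps\bar r_s,
 \qquad
 \ell_s^Tr_s=\bar\ell_s^T\bar r_s .
\]
Hence $q_3$ equals the reduced slow amplification factor $\bar q$ of the $2\times2$ pair \eqref{eq:ABbar}.

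The pair $(\bar A,\bar B)$ is exactly the Jin--Xin pair \eqref{eq:JXAB} after two substitutions. The first is $cd\mapsto s$. The second, $a^2d^2\mapsto r$, needs care: the $(1,2)$ entry is $-1$ rather than $-d$. I therefore will not cite \lemref{lem:factorization} literally. Instead I will rerun its proof with the basis $(\bar r_s,\bar B\bar r_s)$, using the characteristic relation $r=\lambda^2+(\lambda+s)/\eps$ from \eqref{eq:2Dchar}. Equivalently, I can rescale the second coordinate by a diagonal similarity to recover the Jin--Xin form. The reduced matrices \eqref{eq:reduced-basis} depend only on $\lambda$, $\eps$ and $cd+\lambda$, so the certificate $\mathcal R\equiv0$ transfers verbatim with $cd$ replaced by $s$.

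This yields \eqref{eq:factor} with $s$ in place of $cd$. The proof of \lemref{lem:uniform-local} then goes through unchanged, and the estimates require the following:
\begin{itemize}
\item $s$ is purely imaginary, so \eqref{eq:Dlower} holds;
\item $1+2\eps\lambda_s=\zeta_2$, so \eqref{eq:2Dgap} replaces \eqref{eq:gap2};
\item $|\lambda_s|\le M_{\lambda,2}$ from \lemref{lem:2Dprojector};
\item the bound on $\widehat N$ uses only $hx\le\eps_0$.
\end{itemize}
This gives $|q_3-\Tfour(h\lambda_s)|\le Ch^5$. Adding the exponential remainder and multiplying by the bounded projector yields \eqref{eq:uniformlocal}.

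I expect the main obstacle to be the case where $d_x=d_y=0$ or $\mathcal T$ degenerates. At the zero mode, $p\equiv0$, $\mathcal T$ has rank one and $\lambda_s=0$. I will treat it directly: the slow eigenvalue is $0$, and $S_\eps r_s=r_s$ because $L r_s=0$ makes every stage trivial on $r_s$. So $q_3=1$ exactly. Alternatively this case follows by continuity, as in \lemref{lem:factorization}.

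Finally, \thmref{thm:abstract-UA4} gives a modewise bound $C_T^{(\kappa)}h^4\|U^0\|$. Taking the maximum over the finite grid and applying Parseval proves the theorem.
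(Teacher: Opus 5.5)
Your proposal is correct and follows essentially the same route as the paper. It applies \thmref{thm:abstract-UA4} mode by mode, uses $\ell_s^T=\bar\ell_s^T\mathcal T$ and $\mathcal T r_s$ to identify the rank-one projected amplification of the $3\times3$ block with that of the reduced pair \eqref{eq:ABbar}, and then transfers the factorization and the $x=\eps/h$ bound with $cd\mapsto s$. Two of your extra steps are valid refinements of points the paper passes over quickly. First, on the $(1,2)$ entry: the reduced pair equals the Jin--Xin pair only up to the similarity $\mathrm{diag}(1,1/d)$, or equivalently, \eqref{eq:reduced-basis} and $\omega$ depend only on $\lambda$, $\eps$ and $s+\lambda$. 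Second, your direct check that $q_3=1$ at the zero mode is correct.
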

\begin{proof}
Use the full slow right eigenvector $r_s$ from \lemref{lem:2Dprojector}.  Its first component equals one, hence $\mathcal T r_s\neq0$.  The intertwining identity gives
\[
 \bar L(\mathcal T r_s)=\mathcal T L_\eps r_s=\lambda_s\mathcal T r_s,
\]
so $\mathcal T r_s$ is the nonzero reduced slow right eigenvector.  Let $\bar\ell_s^T$ be a left slow eigenvector of the reduced matrix $\bar L=\bar A+\bar B$.  By \eqref{eq:intertwineAB}, $\ell_s^T:=\bar\ell_s^T\mathcal T$ is a full left slow eigenvector.  Under \eqref{eq:2Dgap} the reduced slow root is simple, so the left--right pairing $\bar\ell_s^T\mathcal T r_s$ is nonzero.  Therefore the scalar projected base amplification is
\[
 q_{2D}=\frac{\ell_s^TSr_s}{\ell_s^Tr_s}
 =\frac{\bar\ell_s^T\mathcal T Sr_s}{\bar\ell_s^T\mathcal T r_s}
 =\frac{\bar\ell_s^T\bar S\,\mathcal T r_s}{\bar\ell_s^T\mathcal T r_s},
\]
where \eqref{eq:intertwineS} was used in the last step.  Thus $q_{2D}$ is exactly the projected amplification of the reduced $2\times2$ block \eqref{eq:ABbar}.  The factorization of \lemref{lem:factorization} therefore applies with $cd\mapsto s$ and $a^2d^2\mapsto r$.  Since $s$ is purely imaginary, the same $x=\eps/h$ estimate proves a uniform $O(h^5)$ slow local defect.  Lemma~\ref{lem:2Dprojector} supplies the full-projector and slow-generator bounds required by \thmref{thm:abstract-UA4}; consequently the abstract theorem gives the stated full-state uniform fourth-order estimate.
\end{proof}

The only model-specific spectral restriction in \eqref{eq:2Dgap} is explicit and checkable.  A zero of $\zeta_2$ would merge the two eigenvalues of the coupled $(u,p)$ block and make the rank-one slow projector ill-conditioned; this restriction should not be hidden inside an unspecified constant.

\section{Numerical verification}
The numerical experiments are organized around the claims proved above.  They do not prove the theorems; rather, each experiment tests a distinct quantitative consequence and is designed to reveal an algebraic, implementation, or asymptotic failure if the corresponding theoretical mechanism were incorrect.  Table~\ref{tab:evidence-map} records this claim--evidence alignment.  Every example reports the complete four-level refinement history used to quote a rate.

\begin{table}[!htbp]
\centering
\footnotesize
\setlength{\tabcolsep}{4pt}
\caption{Relationship between the main theoretical claims and the numerical diagnostics.  The analytical column, not the numerical column, is the logical basis for each theorem.}
\label{tab:evidence-map}
\begin{tabular}{p{.22\textwidth}p{.27\textwidth}p{.38\textwidth}}
\toprule
Claim & Analytical basis & Numerical diagnostic\\
\midrule
Preservation of the base fourth-order mixed expansion & \propref{prop:mixed-preservation} & Fixed-$\eps$ correction has fifth-order one-step size (Table~\ref{tab:corr}, Fig.~\ref{fig:corr}).\\
L-stability of the corrected fast response & \propref{prop:lstable} & Base rational and exact exponential fast responses are compared in Table~\ref{tab:stability-samples} and Fig.~\ref{fig:stability}.\\
Uniform $O(h^5)$ slow-block defect of the unchanged core & \lemref{lem:uniform-local}, \corref{cor:slow-core} & Representative scaled defects are listed in Table~\ref{tab:local-samples}, while Fig.~\ref{fig:local} scans twelve decades of $\eps$.\\
Full-state uniform fourth-order time accuracy & \thmref{thm:JX-UA4}, \thmref{thm:2D-UA4} & Complete max-over-$\eps$ refinement histories, three distinguished ratios $\eps/h$, and 1D/2D fixed-grid fields, including two-dimensional unprepared data (Tables~\ref{tab:modal-eq}--\ref{tab:pde2d-unprepared}).\\
\bottomrule
\end{tabular}
\end{table}

The non-coalescence conditions used in the fixed-grid theorems are also checked numerically on the modes appearing in the experiments.  Table~\ref{tab:assumption-diagnostics} reports the smallest sampled scaled discriminant factor $|\zeta|$ (or $|\zeta_2|$ in 2D) and the largest sampled projector norm.  The 1D test has a minimum sampled factor about $9.10\times10^{-1}$ and a maximum projector norm about $1.51$; for the active 2D modes the corresponding values are about $4.48\times10^{-1}$ and $2.96$.  Thus both test sets remain separated from the coalescence excluded by the theory.  These diagnostics verify that the test cases stay inside the theorem's stated spectral regime; they are not substitutes for the analytical bounds.

\begin{table}[!htbp]
\centering
\small
\caption{Non-coalescence and projector diagnostics on the modes and relaxation ranges used in the numerical examples.  The factor $|\zeta|$ is the scaled slow--fast separation entering the projector formulas, not the unscaled eigenvalue distance.}
\label{tab:assumption-diagnostics}
\begin{tabular}{lrr}
\toprule
Test set & Min. $|\zeta|$ & Max. projector norm\\
\midrule
1D Jin--Xin, $|k|\le3$ & 9.097e-01 & 1.514\\
2D active modes & 4.484e-01 & 2.963\\
\bottomrule
\end{tabular}
\end{table}

The 1D Jin--Xin projector and slow amplification use cancellation-resistant closed forms, and the identity in \lemref{lem:factorization} is checked independently in exact symbolic arithmetic.  High-precision arithmetic is used only for the local-defect diagnostic at extremely small $\eps$.  The spatial examples use Fourier spectral differentiation to isolate the temporal mechanism.

\subsection{Example 1: structural preservation, local defect, implementation cross-check, and stiff response}
We begin with one Jin--Xin Fourier mode, $a=1.5$, $c=0.7$, $d=i$.  The experiment has four separate purposes.  First, at fixed $\eps=0.2$ we measure the full operator correction $\|R_{\rm SFRC}-R_{\rm base}\|_2$.  Second, we evaluate the scaled slow defect $h^{-5}\|\Ps S_\eps\Ps-e^{hL_\eps}\Ps\|_F$ with 70-digit arithmetic.  Third, on a moderate $(h,\eps)$ grid we compare the cancellation-resistant factorized implementation with the literal definition $\Ps S_\eps\Ps+e^{hL_\eps}\Pf$.  Fourth, we compare the pure-fast scalar responses of the base method and SFRC.

\begin{table}[!htbp]
\centering
\small
\setlength{\tabcolsep}{5pt}
\caption{Example 1(a): complete fixed-$\eps$ refinement history for the size of the SFRC correction, with $\eps=0.2$.}
\label{tab:corr}
\begin{tabular}{rrr}
\toprule
$h$ & $\|R_{\rm SFRC}-R_{\rm base}\|_2$ & Rate\\
\midrule
0.04000 & $1.6309\times10^{-6}$ & --\\
0.02000 & $6.0965\times10^{-8}$ & 4.742\\
0.01000 & $2.0889\times10^{-9}$ & 4.867\\
0.00500 & $6.8399\times10^{-11}$ & 4.933\\
\bottomrule
\end{tabular}
\end{table}

\begin{figure}[!htbp]
\centering
\begin{subfigure}{.48\textwidth}
\centering\includegraphics[page=1,width=\textwidth]{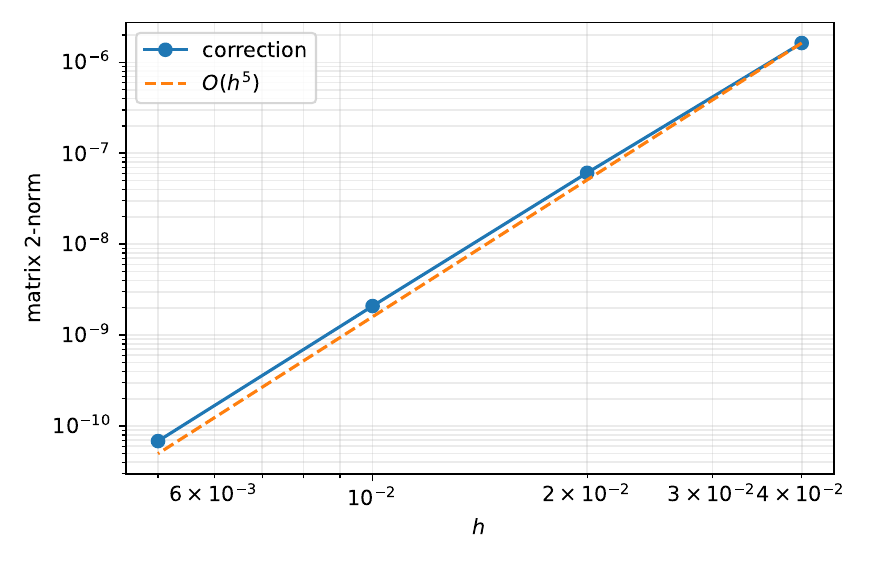}
\caption{Fixed-$\eps$ correction size.}
\label{fig:corr}
\end{subfigure}\hfill
\begin{subfigure}{.48\textwidth}
\centering\includegraphics[page=2,width=\textwidth]{figures.pdf}
\caption{Scaled slow defect for $10^{-12}\le\eps\le1$.}
\label{fig:local}
\end{subfigure}
\caption{Example 1: correction order and uniform local-defect diagnostics.  Panel (b) uses high precision so cancellation in matrices containing $1/\eps$ does not obscure the asymptotic coefficient.}
\end{figure}

The rates in Table~\ref{tab:corr} move monotonically toward five, which is the quantitative consequence of \propref{prop:mixed-preservation}: the correction is invisible to all terms through degree four at fixed $\eps$.  The local-defect plot tests a different statement.  Representative values are given in Table~\ref{tab:local-samples}; for $\eps=10^{-12}$ the scaled defect is about $1.71\times10^{-3}$ for all three steps, while for $\eps=1$ it stays near $7.1\times10^{-2}$.  The absence of growth as $\eps/h$ passes through the stiff and nonstiff regimes is consistent with the uniform coefficient bound proved in \lemref{lem:uniform-local} and directly visualizes the slow-block statement of \corref{cor:slow-core}.  It is important that the plotted quantity uses the unchanged base step $S_\eps$ between exact slow projections; the figure therefore tests the paper's central structural observation rather than only the final SFRC map.

\begin{table}[!htbp]
\centering
\small
\caption{Example 1(b): representative values of $h^{-5}\|\Ps S_\eps\Ps-e^{hL_\eps}\Ps\|_F$.}
\label{tab:local-samples}
\begin{tabular}{rrrr}
\toprule
$\varepsilon$ & $h=0.04$ & $h=0.02$ & $h=0.01$\\
\midrule
1e-12 & 1.7096e-03 & 1.7096e-03 & 1.7096e-03\\
1e-08 & 1.7096e-03 & 1.7096e-03 & 1.7097e-03\\
1e-04 & 1.7856e-03 & 1.8601e-03 & 2.0048e-03\\
1e+00 & 7.0309e-02 & 7.1095e-02 & 7.1499e-02\\
\bottomrule
\end{tabular}
\end{table}

\begin{table}[!htbp]
\centering
\small
\caption{Example 1(c): direct-vs-factorized implementation cross-check on $h\in\{0.04,0.02,0.01\}$ and $\eps\in\{10^{-3},10^{-2},10^{-1},1\}$.  The direct implementation forms $\Ps S_\eps\Ps+e^{hL_\eps}\Pf$ from the stage matrices, while the factorized implementation uses the closed scalar response of \lemref{lem:factorization}.}
\label{tab:implementation-crosscheck}
\begin{tabular}{lr}
\toprule
Cross-check grid & Maximum $\|R_{\rm factored}-R_{\rm direct}\|_2$\\
\midrule
$3\times4$ $(h,\varepsilon)$ pairs & $2.5123\times10^{-16}$\\
\bottomrule
\end{tabular}
\end{table}

The cross-check in Table~\ref{tab:implementation-crosscheck} is an implementation audit rather than a convergence test.  It verifies on an independent evaluation path that the factorized formula used for cancellation resistance reproduces the literal SFRC matrix to near floating-point precision on a moderate parameter grid.  This reduces the possibility that the observed uniform convergence is an artifact of coding the same algebraic formula into both the method and the reference diagnostic.

\begin{figure}[!htbp]
\centering
\includegraphics[page=3,width=.64\textwidth]{figures.pdf}
\caption{Example 1(d): pure-fast scalar response.  The base method is already L-stable with $O(|z|^{-2})$ decay; SFRC has the same L-stability property while replacing the rational fast response by the exact exponential.}
\label{fig:stability}
\end{figure}

\begin{table}[!htbp]
\centering
\small
\caption{Example 1(d): representative amplification magnitudes on the negative real axis.}
\label{tab:stability-samples}
\begin{tabular}{rrr}
\toprule
$z$ & $|R_{\rm base}^{I}(z)|$ & $|R_{\rm SFRC}^{I}(z)|$\\
\midrule
-1 & $3.6686\times10^{-1}$ & $3.6788\times10^{-1}$\\
-10 & $2.8981\times10^{-2}$ & $4.5400\times10^{-5}$\\
-100 & $8.8373\times10^{-4}$ & $3.7201\times10^{-44}$\\
-1000 & $9.8768\times10^{-6}$ & $0$\\
\bottomrule
\end{tabular}
\end{table}

Table~\ref{tab:stability-samples} makes the distinction between ``preserving L-stability'' and ``preserving the same stability function'' explicit.  At $z=-1$ the two responses are of comparable size, whereas at $z=-10$ and beyond the exact exponential is already many orders of magnitude smaller.  The result therefore corroborates the stronger stiff damping predicted by \eqref{eq:expstab}; the analytical proof of L-stability remains \propref{prop:lstable}.

\FloatBarrier
\subsection{Example 2: modal uniform-accuracy scan and distinguished scales}
We next test the full-state global statement.  Set $T=0.2$ and scan $10^{-6}\le\eps\le1$ on a 43-point logarithmic grid for each
\[
 h\in\{0.04,0.02,0.01,0.005\}.
\]
Three methods are compared: the original compact base method, the strict-endpoint equilibrium-defect correction (EDC), and SFRC.  We use both limiting-equilibrium data $U^0=(1,c)^T$ and unprepared data $U^0=(1,0)^T$.  For each pair $(h,\eps)$ the error is maximized over all numerical times $nh\le T$; the convergence tables then maximize once more over the complete relaxation scan.

\begin{table}[!htbp]
\centering
\scriptsize
\setlength{\tabcolsep}{3pt}
\caption{Example 2(a): complete modal refinement history for equilibrium initial data, after maximizing the error over $10^{-6}\le\eps\le1$.}
\label{tab:modal-eq}
\begin{tabular}{rrrrrrr}
\toprule
$h$ & base error & Rate & EDC error & Rate & SFRC error & Rate\\
\midrule
0.04000 & $5.7270\times10^{-4}$ & -- & $2.6577\times10^{-3}$ & -- & $4.1101\times10^{-8}$ & --\\
0.02000 & $2.3981\times10^{-4}$ & 1.256 & $1.3705\times10^{-3}$ & 0.956 & $2.5843\times10^{-9}$ & 3.991\\
0.01000 & $1.1837\times10^{-4}$ & 1.019 & $6.9821\times10^{-4}$ & 0.973 & $1.6201\times10^{-10}$ & 3.996\\
0.00500 & $5.8638\times10^{-5}$ & 1.013 & $3.5301\times10^{-4}$ & 0.984 & $1.0141\times10^{-11}$ & 3.998\\
\bottomrule
\end{tabular}
\end{table}

\begin{table}[!htbp]
\centering
\scriptsize
\setlength{\tabcolsep}{3pt}
\caption{Example 2(b): complete modal refinement history for unprepared initial data.}
\label{tab:modal-unp}
\begin{tabular}{rrrrrrr}
\toprule
$h$ & base error & Rate & EDC error & Rate & SFRC error & Rate\\
\midrule
0.04000 & $2.2778\times10^{2}$ & -- & $2.2820\times10^{2}$ & -- & $2.9741\times10^{-8}$ & --\\
0.02000 & $5.6944\times10^{1}$ & 2.000 & $5.7004\times10^{1}$ & 2.001 & $1.8767\times10^{-9}$ & 3.986\\
0.01000 & $1.4232\times10^{1}$ & 2.000 & $1.4240\times10^{1}$ & 2.001 & $1.1789\times10^{-10}$ & 3.993\\
0.00500 & $3.5560\times10^{0}$ & 2.001 & $3.5570\times10^{0}$ & 2.001 & $7.3898\times10^{-12}$ & 3.996\\
\bottomrule
\end{tabular}
\end{table}

\begin{figure}[!htbp]
\centering
\includegraphics[page=4,width=.92\textwidth]{figures.pdf}
\caption{Example 2: modal error maximized over the relaxation scan.  The base method and EDC retain an intermediate-scale barrier, while SFRC follows fourth-order convergence for both equilibrium and unprepared data.}
\label{fig:modal}
\end{figure}

For equilibrium data, the SFRC rate progresses from $3.991$ to $3.998$, while the base and EDC rates approach one.  For unprepared data, the contrast is even sharper: the base and EDC errors are dominated by the unresolved fast component and exhibit roughly second-order decay over this finite scan, whereas SFRC remains near fourth order because the fast spectral component is advanced exactly.  This is precisely the mechanism behind the preparation-free statement in \thmref{thm:JX-UA4}; the numerical data illustrate it but do not replace the block-diagonal error identity used in the proof.

Figure~\ref{fig:heat} displays the SFRC error over the entire $(h,\eps)$ grid rather than only its maximum.  There is no diagonal ridge near $\eps\approx h$, so the fourth-order behavior is not obtained by avoiding the distinguished scale.

\begin{figure}[!htbp]
\centering
\includegraphics[page=5,width=.92\textwidth]{figures.pdf}
\caption{Example 2: SFRC modal error as a function of $h$ and $\eps$.  The left and right panels correspond to equilibrium and unprepared data.}
\label{fig:heat}
\end{figure}

The $x=\eps/h$ proof in Section~5 is probed directly by three fixed ratios $x=0.1,1,10$.  Figure~\ref{fig:paths} and Tables~\ref{tab:paths-eq}--\ref{tab:paths-unp} report every refinement level.  The $x=0.1$ and $x=1$ sequences settle essentially exactly at order four.  The $x=10$ sequence shows pre-asymptotic rates above four because the leading fourth-order coefficient is small on this particular path; the theorem asserts a uniform lower guarantee of fourth order, not a universal superconvergence result.

\begin{figure}[!htbp]
\centering
\includegraphics[page=6,width=.88\textwidth]{figures.pdf}
\caption{Example 2: SFRC convergence along $\eps/h=0.1$, $1$, and $10$.}
\label{fig:paths}
\end{figure}

\begin{table}[!htbp]
\centering
\footnotesize
\caption{Example 2(c): complete distinguished-path history for equilibrium initial data.}
\label{tab:paths-eq}
\begin{tabular}{rrrrr}
\toprule
$x=\varepsilon/h$ & $h$ & Error & Rate & $\varepsilon$\\
\midrule
0.1 & 0.04000 & $1.9744\times10^{-9}$ & -- & 4.000e-03\\
0.1 & 0.02000 & $1.2343\times10^{-10}$ & 4.000 & 2.000e-03\\
0.1 & 0.01000 & $7.7160\times10^{-12}$ & 4.000 & 1.000e-03\\
0.1 & 0.00500 & $4.8225\times10^{-13}$ & 4.000 & 5.000e-04\\
\addlinespace[2pt]
1 & 0.04000 & $2.7229\times10^{-9}$ & -- & 4.000e-02\\
1 & 0.02000 & $1.6583\times10^{-10}$ & 4.037 & 2.000e-02\\
1 & 0.01000 & $1.0315\times10^{-11}$ & 4.007 & 1.000e-02\\
1 & 0.00500 & $6.4435\times10^{-13}$ & 4.001 & 5.000e-03\\
\addlinespace[2pt]
10 & 0.04000 & $2.8394\times10^{-8}$ & -- & 4.000e-01\\
10 & 0.02000 & $3.1188\times10^{-10}$ & 6.508 & 2.000e-01\\
10 & 0.01000 & $6.9927\times10^{-12}$ & 5.479 & 1.000e-01\\
10 & 0.00500 & $3.2982\times10^{-13}$ & 4.406 & 5.000e-02\\
\bottomrule
\end{tabular}
\end{table}

\begin{table}[!htbp]
\centering
\footnotesize
\caption{Example 2(d): complete distinguished-path history for unprepared initial data.}
\label{tab:paths-unp}
\begin{tabular}{rrrrr}
\toprule
$x=\varepsilon/h$ & $h$ & Error & Rate & $\varepsilon$\\
\midrule
0.1 & 0.04000 & $1.9743\times10^{-9}$ & -- & 4.000e-03\\
0.1 & 0.02000 & $1.2343\times10^{-10}$ & 4.000 & 2.000e-03\\
0.1 & 0.01000 & $7.7159\times10^{-12}$ & 4.000 & 1.000e-03\\
0.1 & 0.00500 & $4.8232\times10^{-13}$ & 4.000 & 5.000e-04\\
\addlinespace[2pt]
1 & 0.04000 & $2.7197\times10^{-9}$ & -- & 4.000e-02\\
1 & 0.02000 & $1.6578\times10^{-10}$ & 4.036 & 2.000e-02\\
1 & 0.01000 & $1.0314\times10^{-11}$ & 4.007 & 1.000e-02\\
1 & 0.00500 & $6.4437\times10^{-13}$ & 4.001 & 5.000e-03\\
\addlinespace[2pt]
10 & 0.04000 & $2.4115\times10^{-8}$ & -- & 4.000e-01\\
10 & 0.02000 & $3.0113\times10^{-10}$ & 6.323 & 2.000e-01\\
10 & 0.01000 & $6.9395\times10^{-12}$ & 5.439 & 1.000e-01\\
10 & 0.00500 & $3.2949\times10^{-13}$ & 4.397 & 5.000e-02\\
\bottomrule
\end{tabular}
\end{table}

\FloatBarrier
\subsection{Example 3: one-dimensional periodic Jin--Xin problem}
On $[0,2\pi]$ we take
\begin{equation}
 u(x,0)=\sin x+0.2\cos2x+0.15\sin3x.
 \label{eq:1Dic}
\end{equation}
For equilibrium data, $v(x,0)=cu(x,0)$; for unprepared data, $v(x,0)=0$.  A 64-point Fourier spectral grid is used for the convergence scan, so the semidiscrete problem is a finite collection of the modal blocks covered by \corref{cor:JX-grid}.  The exact reference is assembled mode by mode from the analytical slow/fast decomposition.  For each $h$, the maximum-in-time field error is further maximized over 25 logarithmically spaced values in $10^{-5}\le\eps\le1$.

\begin{table}[!htbp]
\centering
\scriptsize
\setlength{\tabcolsep}{3pt}
\caption{Example 3(a): complete 1D refinement history for equilibrium initial data.}
\label{tab:1D-eq}
\begin{tabular}{rrrrrrr}
\toprule
$h$ & base error & Rate & SFRC error & Rate & $\varepsilon_{\max}^{\rm base}$ & $\varepsilon_{\max}^{\rm SFRC}$\\
\midrule
0.04000 & $7.5181\times10^{-4}$ & -- & $1.0570\times10^{-6}$ & -- & 1.0e-05 & 1.0e+00\\
0.02000 & $2.0153\times10^{-4}$ & 1.899 & $6.6390\times10^{-8}$ & 3.993 & 5.1e-03 & 1.0e+00\\
0.01000 & $9.3652\times10^{-5}$ & 1.106 & $4.1597\times10^{-9}$ & 3.996 & 2.0e-03 & 1.0e+00\\
0.00500 & $4.9121\times10^{-5}$ & 0.931 & $2.6030\times10^{-10}$ & 3.998 & 1.2e-03 & 1.0e+00\\
\bottomrule
\end{tabular}
\end{table}

\begin{table}[!htbp]
\centering
\scriptsize
\setlength{\tabcolsep}{3pt}
\caption{Example 3(b): complete 1D refinement history for unprepared initial data.}
\label{tab:1D-unp}
\begin{tabular}{rrrrrrr}
\toprule
$h$ & base error & Rate & SFRC error & Rate & $\varepsilon_{\max}^{\rm base}$ & $\varepsilon_{\max}^{\rm SFRC}$\\
\midrule
0.04000 & $1.8769\times10^{1}$ & -- & $7.2488\times10^{-7}$ & -- & 1.0e-05 & 1.0e+00\\
0.02000 & $4.6870\times10^{0}$ & 2.002 & $4.5528\times10^{-8}$ & 3.993 & 1.0e-05 & 1.0e+00\\
0.01000 & $1.1684\times10^{0}$ & 2.004 & $2.8526\times10^{-9}$ & 3.996 & 1.0e-05 & 1.0e+00\\
0.00500 & $2.9035\times10^{-1}$ & 2.009 & $1.7851\times10^{-10}$ & 3.998 & 1.0e-05 & 1.0e+00\\
\bottomrule
\end{tabular}
\end{table}

\begin{figure}[!htbp]
\centering
\begin{subfigure}{.49\textwidth}
\centering\includegraphics[page=7,width=\textwidth]{figures.pdf}
\caption{Maximum-over-$\eps$ convergence.}
\end{subfigure}\hfill
\begin{subfigure}{.49\textwidth}
\centering\includegraphics[page=8,width=\textwidth]{figures.pdf}
\caption{Profile and pointwise error for $\eps=h=0.04$.}
\end{subfigure}
\caption{Example 3: one-dimensional periodic Jin--Xin problem.}
\label{fig:1D}
\end{figure}

The SFRC histories are essentially fourth order for both preparations: the final observed rates are $3.998$ in both tables.  The base method behaves differently because the maximizer moves toward small intermediate relaxation times; for equilibrium data its final rate is about one, and for unprepared data the unresolved fast layer produces much larger errors.  The right panel of Figure~\ref{fig:1D} turns the convergence statement into a physical-space comparison.  At $\eps=h=0.04$, the SFRC profile is visually indistinguishable from the exact field at plotting resolution, whereas the base pointwise error remains easily visible.  This experiment therefore checks the fixed-grid Fourier corollary simultaneously on the three active spatial frequencies in \eqref{eq:1Dic}.

\FloatBarrier
\subsection{Example 4: two-dimensional periodic relaxation system}
For \eqref{eq:2D} we choose
\[
 a_x=1.5,\quad a_y=1.3,\quad c_x=0.55,\quad c_y=0.35,
\]
and the smooth initial macro field
\begin{equation}
 u(x,y,0)=\sin x\sin y+0.25\cos(2x-y)+0.15\sin(x+2y),
 \label{eq:2Dic}
\end{equation}
with equilibrium fluxes $v=c_xu$ and $q=c_yu$.  The convergence scan uses a $16\times16$ Fourier grid and 17 logarithmically spaced relaxation parameters in $10^{-4}\le\eps\le1$; the plotted fields are regenerated on finer grids only to improve visual resolution.  The exact reference is the matrix exponential of each $3\times3$ Fourier block.  The theorem relevant here is therefore the fixed-spatial-resolution temporal result \thmref{thm:2D-UA4}, not a PDE estimate uniform as the Fourier cutoff tends to infinity.

\begin{table}[!htbp]
\centering
\scriptsize
\setlength{\tabcolsep}{3pt}
\caption{Example 4: complete two-dimensional refinement history after maximizing the field error over $10^{-4}\le\eps\le1$.  The final column is the accuracy ratio $E_{\rm base}/E_{\rm SFRC}$ at the same $h$; it is not a CPU-time or work--precision speedup.}
\label{tab:2D}
\begin{tabular}{rrrrrrr}
\toprule
$h$ & base error & Rate & SFRC error & Rate & $\varepsilon_{\max}^{\rm SFRC}$ & Error ratio\\
\midrule
0.04000 & $8.2683\times10^{-4}$ & -- & $3.2238\times10^{-7}$ & -- & 1.0e+00 & 2.56e+03\\
0.02000 & $2.0864\times10^{-4}$ & 1.987 & $2.0707\times10^{-8}$ & 3.961 & 3.2e-01 & 1.01e+04\\
0.01000 & $9.7571\times10^{-5}$ & 1.096 & $1.3177\times10^{-9}$ & 3.974 & 3.2e-01 & 7.40e+04\\
0.00500 & $4.9373\times10^{-5}$ & 0.983 & $8.3130\times10^{-11}$ & 3.986 & 3.2e-01 & 5.94e+05\\
\bottomrule
\end{tabular}
\end{table}

\begin{figure}[!htbp]
\centering
\includegraphics[page=9,width=.64\textwidth]{figures.pdf}
\caption{Example 4: two-dimensional maximum-in-time error, maximized over the relaxation scan.}
\label{fig:2Dconv}
\end{figure}

The full history in Table~\ref{tab:2D} gives SFRC rates $3.961$, $3.974$, and $3.986$ on the three refinements, while the base method approaches first order after a transient.  The accuracy ratio $E_{\rm base}/E_{\rm SFRC}$ grows from about $2.6\times10^3$ at $h=0.04$ to about $5.9\times10^5$ at $h=0.005$.  This ratio compares errors only; it is not a computational-efficiency or work--precision speedup, since SFRC additionally uses the exact slow projector and exact fast semigroup.  The SFRC maximizer is at $\eps=1$ on the coarsest step and near $\eps=0.32$ on the three refined steps, so the reported fourth-order rate is not generated by the strict endpoint.

The theorem is a full-state statement and does not require equilibrium-prepared data.  To test that part of the claim in two dimensions, we repeat the same relaxation scan with the deliberately unprepared fluxes $v(x,y,0)=q(x,y,0)=0$.  Table~\ref{tab:pde2d-unprepared} reports the complete refinement history.  The SFRC rates are $3.951$, $3.973$, and $3.986$, while the base maximizer stays at the smallest sampled relaxation time $10^{-4}$ and exhibits the much larger unresolved-fast error.  This complements the unprepared one-dimensional experiment and checks the preparation-free mechanism on the full three-variable Fourier blocks.

\begin{table}[!htbp]
\centering
\scriptsize
\setlength{\tabcolsep}{3pt}
\caption{Example 4(b): two-dimensional unprepared-data refinement history with $v(x,y,0)=q(x,y,0)=0$, after maximizing the full-state field error over $10^{-4}\le\eps\le1$.}
\label{tab:pde2d-unprepared}
\begin{tabular}{rrrrrrr}
\toprule
$h$ & base error & Rate & SFRC error & Rate & $\varepsilon_{\max}^{\rm base}$ & $\varepsilon_{\max}^{\rm SFRC}$\\
\midrule
0.04000 & $1.1788\times10^{0}$ & -- & $2.7082\times10^{-7}$ & -- & 1.0e-04 & 3.2e-01\\
0.02000 & $2.9043\times10^{-1}$ & 2.021 & $1.7508\times10^{-8}$ & 3.951 & 1.0e-04 & 3.2e-01\\
0.01000 & $7.0496\times10^{-2}$ & 2.043 & $1.1146\times10^{-9}$ & 3.973 & 1.0e-04 & 3.2e-01\\
0.00500 & $1.6660\times10^{-2}$ & 2.081 & $7.0331\times10^{-11}$ & 3.986 & 1.0e-04 & 3.2e-01\\
\bottomrule
\end{tabular}
\end{table}

To make the correction visible beyond a convergence table, Figures~\ref{fig:2Dsol}--\ref{fig:2Dref} use the distinguished path $\eps=h$.  The solution fields themselves are smooth and therefore look similar, but the logarithmic error maps reveal the distributed improvement over the entire domain.

\begin{figure}[!htbp]
\centering
\includegraphics[page=10,width=.98\textwidth]{figures.pdf}
\caption{Example 4: exact, base, and SFRC macro fields at $T=0.2$ for $\eps=h=0.04$.}
\label{fig:2Dsol}
\end{figure}

\begin{figure}[!htbp]
\centering
\includegraphics[page=11,width=.98\textwidth]{figures.pdf}
\caption{Example 4: logarithmic macro-error fields for $\eps=h=0.04$.  The third panel shows the local quantity $\log_{10}[(|e_{\rm base}|+\tau)/(|e_{\rm SFRC}|+\tau)]$, i.e. a pointwise error ratio on a logarithmic scale.}
\label{fig:2Derr}
\end{figure}

\begin{figure}[!htbp]
\centering
\includegraphics[page=12,width=.92\textwidth]{figures.pdf}
\caption{Example 4: distinguished-scale refinement.  Base errors are shown in the top row and SFRC errors in the bottom row using one shared logarithmic scale.}
\label{fig:2Dref}
\end{figure}

The 2D experiment corroborates the modewise theorem on the chosen finite Fourier grid and confirms that the correction acts simultaneously on all active smooth modes rather than on one hand-picked scalar amplification factor.  It does not, and cannot, establish a spatially uniform PDE theorem: the analytical constant is permitted to depend on the fixed Fourier cutoff, exactly as stated in \thmref{thm:2D-UA4}.

\section{Discussion, limitations, and implications for method design}
The rigorous results establish compatibility of several properties that can appear to compete with one another.  The corrected scheme uses the same compact fourth-order two-derivative formula on the slow component, so the stage coefficients, the full-vector-field derivative evaluations, and the two implicit solves are not redesigned.  \Propref{prop:mixed-preservation} shows at the operator level that the correction is fifth order for every fixed nonzero relaxation time.  Therefore the ordinary fourth-order order conditions and the evaluated non-commuting mixed operator expansion of the original method are not merely observed numerically; they are mathematically identical through the design order.  The pure-fast response is the exact exponential and hence has the L-stability property, although it is not the original rational stability function.  The strict relaxation limit is fourth order.  Finally, \thmref{thm:JX-UA4} and \thmref{thm:2D-UA4} prove full-state uniform fourth-order time accuracy on the stated fixed Fourier grids; the constants are not claimed to be uniform as the spatial cutoff tends to infinity.

The theorem is stronger than a preparation-dependent estimate because the fast error is identically zero.  This should not be confused with a claim that arbitrary high-order IMEX methods automatically have preparation-free uniform accuracy.  Existing rigorous analyses of IMEX-BDF and IMEX-RK methods use energy estimates, structural stability, stage conditions, and high regularity or consistent initial data to control unresolved fast layers \cite{HuShuBDF2021,HuShuRK2025,MaHuang2025}.  The present method avoids that particular difficulty by paying for exact fast spectral evolution.  Its theoretical standard is therefore high, but its immediate applicability is narrower.

The most important limitation is computational structure.  An exact slow spectral projector and the exact action $e^{hL_\eps}\Pf$ are inexpensive for the $2\times2$ and $3\times3$ Fourier blocks used here and can be precomputed for constant coefficients.  They are not inexpensive or even globally meaningful for a nonlinear conservation law with a space-dependent Jacobian, for a large unstructured discretization, or for a relaxation operator whose slow subspace changes strongly in space and time.  Consequently SFRC should be viewed as a rigorous benchmark construction.  Its principal contribution is not the computational novelty of using an exact spectral projector or exact exponential; those are deliberately strong ingredients.  For the same reason, the large base/SFRC error ratios reported in the numerical section are accuracy comparisons and should not be interpreted as CPU-time or work--precision speedups.  The contribution is the structural conclusion that the original compact IMEX temporal core is not itself the obstruction to uniform fourth order, together with a proof of exactly what additional slow--fast information is sufficient in the linear setting.  In particular, \corref{cor:slow-core} shows that the slow-projected response of the unchanged base core already has a uniform $O(h^5)$ local defect; the correction is needed to identify and preserve that finite-$\eps$ slow response and to control the complementary fast component, not to repair a hidden low-order slow-stage formula.

This viewpoint suggests a practical next step.  Replace $\Ps$ by a local slow-manifold approximation, for example a Chapman--Enskog or micro--macro projector through enough orders in $\eps$, and replace the exact fast semigroup by a source exponential, a rational approximation, or a Krylov action that is exact to the required uniform order.  The present proofs show what must be preserved.  The slow-block one-step defect must remain $O(h^5)$ uniformly in $\eps/h$; the approximate fast block must not reintroduce an $O(1)$ initial-layer numerical error; and the overall correction must remain $O(h^5)$ for fixed $\eps$ so the base method's mixed expansion through fourth order is not altered.  These are sharper design conditions than simply requiring the correct two endpoint limits.

The natural question is whether the same conclusion can be promoted to a genuinely nonlinear relaxation theorem.  The present paper does not make that claim, and the linear proof cannot simply be copied: for nonlinear problems the slow projector depends on the state, need not commute with the flow, and an exact fast semigroup is generally unavailable.  Nevertheless, the analysis identifies a concrete sufficient-condition program.  A nonlinear extension would require a smooth $\eps$-dependent slow manifold with a uniformly Lipschitz local retraction, a corrected slow step whose local defect is $O(h^5)$ uniformly in $\eps/h$, a realizable fast evolution that is uniformly stable and fourth-order accurate through the initial layer, and a fixed-$\eps$ correction that remains $O(h^5)$ so the base mixed expansion is unchanged.  Under such ingredients, the stability-plus-telescoping argument of Section~4 has a nonlinear analogue through a discrete Gronwall estimate.  What is currently missing is not the global error mechanism but a proof that practical Chapman--Enskog or micro--macro approximations satisfy those uniform local and fast-layer bounds.  Establishing that point is a separate nonlinear analysis and is therefore left outside the claims of this paper.

A second implication concerns the earlier equilibrium-defect correction from the companion analysis \cite{HuoAA2026}.  That construction remains useful as a minimal strict-limit repair because it needs only equilibrium information and no spectral decomposition.  The current analysis does not invalidate that role.  Rather, it clarifies the hierarchy: an endpoint correction can restore strict AA4 at essentially no stage cost, while a uniform correction must control both the slow response and the fast spectral component throughout the intermediate scale.  SFRC supplies one rigorous way to do so in the linear setting.

Finally, the exact factorization in \lemref{lem:factorization} is useful independently of the specific corrected method.  It exposes the mechanism by which the apparent singular coefficients of the base step cancel on the finite-$\eps$ slow eigenspace.  The factor $cd+\lambda_s=O(\eps)$ is precisely what compensates the implicit denominators when $\eps/h$ is small, while the $x^4$ growth of the denominator controls the opposite fixed-$\eps$ regime.  This is a concrete algebraic explanation of why a slow-manifold-aware analysis can recover high order even when the limiting-equilibrium map of the unprojected base scheme is only first order.

\section{Conclusions}
We have constructed a slow--fast response correction of a compact two-stage fourth-order two-derivative IMEX method under the explicit requirement that the principal structural properties of the original temporal discretization be retained.  The corrected method applies the unchanged compact IMEX core to the exact slow spectral component and advances the fast complement with its exact semigroup.  It introduces no additional implicit solve, although it requires the additional exact spectral operations stated explicitly in Section~3.  For fixed nonzero relaxation time the correction is a fifth-order perturbation, so the original fourth-order non-commuting mixed expansion is preserved exactly.  The pure-fast response is $e^z$ and is therefore L-stable; this preserves the stability property while replacing the base rational response by exact exponential damping.

The central theoretical contribution is a complete uniform-accuracy proof rather than a short endpoint argument.  An abstract slow--fast decomposition reduces the full-state error to the slow block and converts a uniform fifth-order local defect into a fourth-order global estimate.  For the linear Jin--Xin Fourier block, the local defect is then proved uniformly: the projected base amplification admits an exact $h^5$ factorization, the distinguished variable $x=\eps/h$ is introduced, the two implicit denominators are bounded from below for Fourier modes, and the remaining coefficient is shown to stay bounded for all $x\ge0$.  This verifies the hypothesis of the abstract theorem rather than assuming it.  A two-dimensional three-variable relaxation system reduces to the same coupled slow block, giving a modewise extension on fixed Fourier grids.

The main conclusion is therefore structural.  The order loss of the unmodified compact scheme is not evidence that its two-stage/two-solve fourth-order core is incompatible with uniform accuracy.  Once the finite-$\eps$ slow response is isolated and the fast component is controlled exactly, the original slow-stage core can coexist with its fourth-order mixed expansion, the L-stability property, strict-limit fourth order, and full-state uniform-in-$\eps$ fourth-order time accuracy.  The principal contribution is this compatibility theorem and the explicit Jin--Xin proof of the uniform slow defect, not the use of an exact projector by itself.  The uniform slow-block corollary makes the mechanism especially transparent: after restriction to the exact finite-$\eps$ slow eigenspace, the unchanged compact base core already has a uniform fifth-order one-step defect, so the earlier strict-equilibrium order loss cannot be attributed to an intrinsic failure of its slow-stage fourth-order dynamics.  The proof is rigorous for the stated linear spectral setting and fixed Fourier resolutions; the numerical experiments corroborate, but do not prove, its quantitative consequences.  A general nonlinear UA4 theorem remains open because state-dependent slow manifolds and approximate fast solvers require additional uniform local and initial-layer estimates.  The unresolved practical problem is therefore to replace the exact projector and exact fast semigroup by local Chapman--Enskog, micro--macro, source-exponential, rational, or Krylov approximations while preserving the same uniform $O(h^5)$ slow defect and sufficiently accurate fast-layer evolution.

\section*{Data and code availability}

\section*{Acknowledgments}
The author thanks colleagues for helpful discussions that sharpened the distinction between strict-limit asymptotic accuracy and uniform accuracy.  Zhixin Huo gratefully acknowledges support from the Key Program of Henan Higher Education Institutions (Grant No.~26A110007), the Young Talents Fund of Henan Province (Grant No.~252300423500), and the Doctoral Startup Foundation of Henan Polytechnic University (Grant No.~B2024-60).

\appendix
\section{Additional algebra behind the factorization}
This appendix records intermediate identities that make \lemref{lem:factorization} independently checkable from the displayed algebra.  Because $B^2=-\eps^{-1}B$ for \eqref{eq:JXAB}, the inverse of $I+\alpha B$ is
\begin{equation}
 (I+\alpha B)^{-1}
 =I-\frac{\alpha}{1-\alpha/\eps}B,
 \label{eq:Binv}
\end{equation}
whenever the denominator is nonzero.  On the slow eigenvector, $Lr_s=\lambda r_s$, so the midpoint equation can be written
\[
 \left(I-\frac h2B+\frac{h^2\lambda}{8}B\right)U^*
 =\left(I+\frac h2A+\frac{h^2\lambda}{8}A\right)r_s.
\]
Using $Ar_s=\lambda r_s-Br_s$ reduces the right-hand side to a combination of $r_s$ and $Br_s$.  The source action is finite on the slow eigenvector because
\begin{equation}
 Br_s
 =\begin{pmatrix}0\\(cd+\lambda)/(\eps d)\end{pmatrix}
 =\begin{pmatrix}0\\(a^2d^2-\lambda^2)/d\end{pmatrix},
 \label{eq:Brfinite}
\end{equation}
where \eqref{eq:char} was used.  This identity is the first cancellation of the singular $1/\eps$ factor.

For completeness, let $V=(r_s,Br_s)$.  Direct application of $A$ and $B$ to these two basis vectors gives
\[
 A r_s=\lambda r_s-Br_s,\qquad B r_s=Br_s,
\]
\[
 A(Br_s)=-\frac{cd+\lambda}{\eps}r_s-\lambda Br_s,\qquad
 B(Br_s)=-\frac1\eps Br_s,
\]
which proves \eqref{eq:reduced-basis}.  In the same basis the two implicit matrices are explicitly
\begin{align}
 \widehat M_*&=I-\frac h2\widehat B+\frac{h^2}{8}\widehat B\widehat L
 =\begin{pmatrix}
 1&0\\ \dfrac{h(h\lambda-4)}8&1+\dfrac{h}{2\eps}+\dfrac{h^2}{8\eps^2}-\dfrac{cdh^2}{8\eps}
 \end{pmatrix},\label{eq:Mstar-hat}\\
 \widehat M_{n+1}&=I-h\widehat B+\frac{h^2}{6}\widehat B\widehat L
 =\begin{pmatrix}
 1&0\\ \dfrac{h(h\lambda-6)}6&1+\dfrac{h}{\eps}+\dfrac{h^2}{6\eps^2}-\dfrac{cdh^2}{6\eps}
 \end{pmatrix}.\label{eq:Mend-hat}
\end{align}
Their determinants are therefore exactly the quantities stated in \eqref{eq:det-certificate}.  Also
\[
 \frac{\ell_s^TBr_s}{\ell_s^Tr_s}=-\frac{cd+\lambda}{1+2\eps\lambda},
\]
which proves \eqref{eq:omega}.  Hence every ingredient of the scalar rational function $q=\omega^T\widehat S e_1$ is explicit before any polynomial expansion is performed.

The same reduction applies to the final implicit solve.  After expressing every occurrence of $a^2d^2$ through \eqref{eq:achar}, all remaining denominators can be collected into the two scalar factors $D_8$ and $D_6$ of \eqref{eq:D8}--\eqref{eq:D6}.  The scalar left projection contributes the factor
\begin{equation}
 \ell_s^Tr_s=-\frac{1+2\eps\lambda}{\eps d}.
 \end{equation}
Since $1+2\eps\lambda=\zeta$, the spectral non-coalescence condition is exactly the condition that this biorthogonal normalization not degenerate after the natural $1/\eps$ scaling is removed.

Clearing the denominator $24(1+2\eps\lambda)D_8D_6$ from $q-\Tfour(h\lambda)$ gives the certificate \eqref{eq:certificate}.  Multiplying out the two triangular inverses \eqref{eq:Mstar-hat}--\eqref{eq:Mend-hat} and using \eqref{eq:achar} cancels every monomial of total $h$-degree below five; the remaining numerator is exactly $-h^5\lambda^3(cd+\lambda)N$.  Thus the certificate vanishes identically and \eqref{eq:factor} follows.  The complete polynomial $N$ is given in \eqref{eq:N}; no omitted higher-order term is present.

\section{Why the global proof has no stiff exponential constant}
A common failure mode in stiff error estimates is to derive a local truncation error and then propagate it using a stability constant that grows like $e^{T/\eps}$.  That does not occur here for two separate reasons.  First, the fast part is exact and cancels from the numerical error as shown in \eqref{eq:fastcancel}; no bound on powers of an approximate fast amplification matrix is required.  Second, the only propagated numerical block is the slow block, whose generator is uniformly bounded by \eqref{eq:slowbound}.  The one-step bound in \lemref{lem:slow-stability} is therefore $e^{C h}$, not $e^{Ch/\eps}$.  Raising it over $T/h$ steps gives $e^{CT}$ with a constant independent of $\eps$.

For the Jin--Xin mode, this can be seen directly from the scalar expression.  The exact slow eigenvalue remains bounded as $\eps\to0$, while the corrected fast eigenvalue is represented by $e^{h\lambda_f}$ exactly.  Hence the only accumulated defect is
\[
 q^n-e^{nh\lambda_s},
\]
and the factor multiplying the local $O(h^5)$ defect is bounded by $e^{CT}$.  This is the precise stability step needed to prevent a stiff exponential constant from entering the global estimate.

\section{Implementation remarks}
The one-dimensional implementation does not obtain the slow projector from a numerically ill-conditioned eigendecomposition when $\eps$ is tiny.  It uses the scaled closed form \eqref{eq:PsJX} and the cancellation-free root \eqref{eq:lams}.  Likewise, the projected scalar base response is evaluated through the exact factorization of \lemref{lem:factorization}.  These implementation choices matter: direct double-precision eigendecomposition of matrices containing entries of size $1/\eps$ can obscure the uniform limit through roundoff even when the mathematical projector is perfectly bounded.

{\scriptsize
\sloppy
\setlength{\columnsep}{18pt}
\begin{multicols}{2}

\end{multicols}
}
\end{document}